\documentclass[11pt]{article}

\addtolength{\textwidth}{4cm} \addtolength{\hoffset}{-2cm}
\addtolength{\textheight}{4cm}\addtolength{\voffset}{-2cm}


\usepackage{etex}
\reserveinserts{28}
\usepackage[all]{xy}
\usepackage{enumitem}

\usepackage{amsfonts}
\usepackage{amsthm}
\usepackage{graphicx}
\usepackage{mathrsfs}
\usepackage{bm}
\usepackage{cite}
\usepackage{amssymb,amsmath} 
\usepackage{makecell}
\usepackage{pgf}
\usepackage{pgffor}
\usepackage{pgfcalendar}
\usepackage{pgfpages}
\usepackage{shuffle,yfonts}
\usepackage{mathtools}
\usepackage{tikz}
\usetikzlibrary{arrows,shapes,chains,patterns}

 \allowdisplaybreaks

\theoremstyle{plain}
\newtheorem{thm}{Theorem}[section]
\newtheorem{lem}[thm]{Lemma}
\newtheorem{cor}[thm]{Corollary}

\newtheorem{prop}[thm]{Proposition}

\theoremstyle{definition}

\catcode`!=11
\let\!int\int \def\int{\displaystyle\!int}
\let\!lim\lim \def\lim{\displaystyle\!lim}
\let\!sum\sum \def\sum{\displaystyle\!sum}
\let\!sup\sup \def\sup{\displaystyle\!sup}
\let\!inf\inf \def\inf{\displaystyle\!inf}
\let\!cap\cap \def\cap{\displaystyle\!cap}
\let\!max\max \def\max{\displaystyle\!max}
\let\!min\min \def\min{\displaystyle\!min}
\let\!frac\frac \def\frac{\displaystyle\!frac}
\catcode`!=12

\let\oldsection\section
\renewcommand\section{\setcounter{equation}{0}\oldsection}

\allowdisplaybreaks

\DeclareMathOperator*{\sgn}{{sgn}}

\newcommand{\nc}{\newcommand}
\nc{\baru}{{\bar{u}}}
\nc{\tls}{{\tilde{s}}}
\nc{\tlr}{{\tilde{r}}}
\nc{\tlt}{{\tilde{t}}}
\nc{\tlk}{{\tilde{k}}}
\nc{\tla}{{\tilde{a}}}
\nc{\tlb}{{\tilde{b}}}
\nc{\tlc}{{\tilde{c}}}
\nc{\db}{{\mathbb D}}
\nc{\gs}{\sigma}
\nc{\ol}{\overline}
\nc{\sha}{\shuffle}
\nc{\gd}{\delta}
\nc\tpi{{\tilde{\pi}}}
\nc\ora{\overrightarrow}
\nc\om{{\omega}}
\nc\bfs{{\bf s}}
\nc\bfk{{\boldsymbol{\sl k}}}
\nc\bfl{{\boldsymbol{\sl l}}}
\nc\bfn{{\bf n}}
\nc\bfp{{\bf p}}
\nc\bfq{{\bf q}}
\nc\bfz{{\bf z}}
\nc\bfx{{\bf x}}
\nc\bfB{{\bf B}}
\nc\bfTB{{\bf TB}}
\nc\bfeta{{\boldsymbol \eta}}
\nc\eps{{\varepsilon}}
\nc\bfeps{{\boldsymbol \eps}}
\nc\gl{{\lambda}}
\nc\CMZV{\mathsf{CMZV}}
\nc\AMTV{\mathsf{AMTV}}
\nc{\AMMV}{\mathsf {AMMV}}

\nc\upi{{i}}
\nc\ww{\bm {w}}
\nc\bb{\bm b}
\nc\vv{\bm v}
\nc\UU{\mbox{\bfseries U}}
\nc\FF{\mbox{\bfseries \itshape F}}
\nc\h{\mbox{\bfseries \itshape h}}\nc\dd{\mbox{d}}
\nc\g{\mbox{\bfseries \itshape g}}
\nc\xx{\mbox{\bfseries \itshape x}}
\def\R{\mathbb{R}}
\def\N{\mathbb{N}}
\def\Q{\mathbb{Q}}

\def\z{\zeta}

\def\xx{\left(\frac{1-x}{1+x} \right)}

\nc\bfsi{{\boldsymbol \sigma}}

\nc\divg{{\text{div}}}
\setlength{\arraycolsep}{0.5mm}

\begin{document}
\title {\bf Proof of Kaneko--Tsumura Conjecture \\
on Triple $T$-Values}
\author{Sasha Berger, Aarav Chandra, Jasper Jain, Daniel Xu \\
Department of Mathematics, The Bishop's School, La Jolla, CA 92037, USA\\
sasha.berger.23@bishops.com,aarav.chandra.22@bishops.com,\\
jasper.jain.23@bishops.com,daniel.xu.23@bishops.com\\
\ \\
Ce Xu  \\
School of Math. and Statistics, Anhui Normal University, Wuhu 241000, P.R. China\\
cexu2020@ahnu.edu.cn\\
\ \\
Jianqiang Zhao\\
Department of Mathematics, The Bishop's School, La Jolla, CA 92037, USA\\
zhaoj@ihes.fr}

\date{}
\maketitle \noindent{\bf Abstract} Many $\Q$-linear relations exist between multiple zeta values, the most interesting of which are the weighted sum formulas. In this paper, we generalized these to Euler sums and Kaneko and Tsumura's multiple $T$-values, a variant of the multiple zeta values, by considering generating functions of the Euler sums. Through this approach we are able to re-prove a few known formulas, confirm a conjecture of Kaneko and Tsumura on triple $T$-values, and discover many new identities.

\medskip
\noindent{\bf Keywords}: Euler sums, multiple zeta values, weighted sum formulas, multiple $T$-value.

\medskip
\noindent{\bf AMS Subject Classifications (2020):}  11M32, 11M06, 11G55.


\bigskip

\section{Introduction}
In their seminal work \cite{KanekoTs2019}, Kaneko and Tsumura defined the multiple $T$-values (MTVs),
a variation of the multiple zeta values as follows:
for any positive integers $s_1,\dotsc,s_d$ with $s_1>1$, let
\begin{align}
T(s_1,\dotsc,s_d):=&\, \sum_{\substack{m_1>\dotsm >m_d>0\\ m_j\equiv d-j+1 \pmod{2}} } \frac{2^d}{m_1^{s_1}\dotsm m_d^{s_d}} \notag\\
=&\, \sum_{n_1>\dotsm>n_d>0} \frac{(1+(-1)^{n_1+d}) \dotsm (1+(-1)^{n_{d-1}}) (1-(-1)^{n_d}) }{n_1^{s_1}\dotsm n_d^{s_d}}.\label{equ:defnMTV}
\end{align}
They derived many interesting facts concerning these values with analytical methods and posed some conjectures
when the depth $d=3$. In this short note, we shall prove
the following result which was stated in \cite[Conjecture~4.6]{KanekoTs2019}.
\begin{thm}\label{thm:main} \emph{(=Corollary~\ref{cor:KTconj})}
For all $w\ge 4$
\begin{equation*}
    \sum_{a+b+c=w} 2^b(3^{a-1}-1) T(a,b,c)=\frac23  (w-1)(w-2) T(w).
\end{equation*}
\end{thm}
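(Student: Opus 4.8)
The plan is to fold the entire family of identities (one for each $w$) into a single generating function in an auxiliary variable $t$, reduce the theorem to one explicit functional identity, and then prove that identity by matching poles. Write $S(w)$ for the left-hand side. From \eqref{equ:defnMTV} with $d=3$ we have $T(a,b,c)=8\sum_{l>m>n>0}l^{-a}m^{-b}n^{-c}$, the sum running over $l,n$ odd and $m$ even. I would first sum the three geometric series in $a,b,c$. The essential feature is that the coefficient $2^b(3^{a-1}-1)$ is engineered so that its $a=1$ part vanishes, which silently removes the otherwise divergent terms $T(1,b,c)$, while the sums collapse to rational functions:
\[
\sum_{a\ge1}(3^{a-1}-1)\Big(\tfrac{t}{l}\Big)^a=\frac{2t^2}{(l-3t)(l-t)},\qquad \sum_{b\ge1}2^b\Big(\tfrac{t}{m}\Big)^b=\frac{2t}{m-2t},\qquad \sum_{c\ge1}\Big(\tfrac{t}{n}\Big)^c=\frac{t}{n-t}.
\]
Multiplying these together with the factor $8$ gives
\[
\sum_{w\ge4}S(w)\,t^w=32\,t^4\sum_{\substack{l>m>n>0\\ l,n\ \mathrm{odd},\ m\ \mathrm{even}}}\frac{1}{(l-3t)(l-t)(m-2t)(n-t)}.
\]

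For the right-hand side of the theorem, $T(w)=2\sum_{k\ \mathrm{odd}}k^{-w}$ together with $\sum_{w}(w-1)(w-2)y^w=2y^3/(1-y)^3$ yields $\sum_{w}\frac23(w-1)(w-2)T(w)\,t^w=\frac{8}{3}t^3\sum_{k\ \mathrm{odd}}(k-t)^{-3}$. All series involved converge and are analytic on $|t|<1$ (the nearest pole on either side is at $t=1$), so the theorem becomes equivalent to the single identity
\[
12\,t\sum_{\substack{l>m>n>0\\ l,n\ \mathrm{odd},\ m\ \mathrm{even}}}\frac{1}{(l-3t)(l-t)(m-2t)(n-t)}=\sum_{k\ \mathrm{odd},\,k>0}\frac{1}{(k-t)^3}\qquad(\star).
\]

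To prove $(\star)$ I would compare the two sides as meromorphic functions. The right-hand side has only triple poles, at the odd integers $t=k$, with principal part $(k-t)^{-3}$. On the left, the single ``diagonal'' term $(l,m,n)=(3k,2k,k)$ — which respects $3k>2k>k$ and the odd/even/odd parity pattern when $k$ is odd — equals $\frac{2t}{(3k-t)(k-t)^3}$, whose leading Laurent coefficient at $t=k$ is exactly $1$; thus this one term already reproduces the full triple pole on the right. Everything else must then cancel: the subleading (double and simple) poles of the diagonal term at $t=k$, its extra pole at $t=3k$, and, most importantly, the ``spurious'' poles of all off-diagonal terms at the non-integer points $t=l/3$ and $t=m/2$. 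The remaining task is to show that, after partial-fraction decomposition of each summand in $t$ and resummation over the constrained lattice $l>m>n$, these contributions cancel identically.

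I expect this global cancellation to be the main obstacle. The difficulty is that the three linear factors carry distinct moduli $3,2,1$ interacting with the odd/even/odd parity constraints, so the inner sums over $l$, $m$, $n$ do not telescope independently; they produce digamma/cotangent-type pieces that must conspire to vanish. This is precisely where I would invoke the ``generating functions of Euler sums'' method announced in the abstract: rewrite the parity-restricted sums as alternating Euler sums, and recognize $(\star)$ as a special value of a closed-form generating-function identity — most naturally a refinement in which the shifts $3t,2t,t$ are promoted to independent variables, so that the cancellations become transparent symmetries of the parent identity. Establishing that cleaner parent identity and then specializing seems the most promising way to control the cancellation and complete the proof.
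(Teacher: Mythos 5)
Your reduction to $(\star)$ already contains an error, and even after correcting it the proof is not carried out. First the error: the right-hand generating function $\frac83 t^3\sum_{k\ \mathrm{odd}}(k-t)^{-3}$ has a nonzero coefficient at $t^3$, namely $\frac43 T(3)$, whereas your left-hand side is $O(t^4)$ (the only composition of $3$ is $(1,1,1)$, and its coefficient $3^{0}-1$ vanishes). So $(\star)$ as written is false: it encodes the claimed identity for all $w\ge 3$, and the $w=3$ case fails. The fix is to subtract the $t^3$ term, i.e.\ to replace the right side of $(\star)$ by $\sum_{k\ \mathrm{odd}}\bigl[(k-t)^{-3}-k^{-3}\bigr]$. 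This is symptomatic of a second, unaddressed point: matching principal parts at poles determines a meromorphic function only up to an entire summand, so you would also need a growth or normalization argument (for instance the behaviour at $t=0$, which is exactly where the discrepancy above lives) before ``same poles'' implies ``equal.''

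The more serious gap is the one you flag yourself: the entire content of the theorem is the cancellation of the ``spurious'' contributions --- the double and simple parts of the pole at $t=k$ beyond the diagonal term $(3k,2k,k)$, and the poles at $t=l/3$ and $t=m/2$. The principal parts there are not finite quantities but infinite sums over the remaining two indices; e.g.\ the residue at $t=l/3$ involves $\sum_{m,n}\frac{1}{(m-2l/3)(n-l/3)}$, a parity-restricted double sum with denominators shifted by thirds. Proving that these conspire to vanish is essentially a family of nontrivial relations among (higher-level) double Euler sums, i.e.\ not easier than the original problem, and the proposal offers no mechanism for it beyond the hope of a ``parent identity.'' For comparison, the paper's generating functions live in the exponent variables $x,y,z$ (one per slot of the composition), not in a single weight variable: the functional equations \eqref{equ:FExyzDtimesSingle} and \eqref{equ:tripleProd} come from the regularized double shuffle relations (stuffle \eqref{equ:MPLstuffleDdep3} against shuffle \eqref{equ:2Riem1RiemProddep3}), and the weights $2^{b-1}$ and $3^{a-1}2^{b-1}$ are produced by specializing at $(x,y,z)=(0,1,1)$ and $(1,1,1)$ (Theorems \ref{thm:sumzabcWt2bPOWER} and \ref{thm:sumzabcWt3a2bPOWER}). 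That algebraic route is what actually supplies the cancellations your analytic scheme leaves open.
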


Observe that all MTVs can be decomposed as linear combinations of Euler sums.
Our strategy to prove Theorem~\ref{thm:main} is to derive similar weighted sum formulas for each Euler sum involved by using
their generating functions. In the process, we also discover a few other types of weighted sum formulas for Euler sums.
We remark that the idea of
proving identities among MZVs using generating functions first appeared in \cite{GanglKaZa2006}.
We also notice that Machide already worked on the MZV case \cite{Machide2013,Machide2015b} using multiple polylogarithms when depth $d\le 4$. Yuan and the last author also studied these directly by using only finite double shuffle relations in \cite{YuanZh2013b} when $d=3$.

\section{Multiple zeta values and Euler sums}
The classical \emph{multiple zeta values} (abbr. MZVs) are defined by (see \cite{Hoffman1992,Zagier1994})
\begin{align*}
\z(s_1,\dotsc,s_d):=\sum\limits_{m_1>\dotsb>m_d>0 } \frac{1}{m_1^{s_1}\dotsm m_d^{s_d}},
\end{align*}
for positive integers $s_1,\dotsc,s_d$ with $s_1>1$. We call $s_1+\dotsb+s_d$ and $d$ the \emph{weight} and \emph{depth}, respectively. A composition $(s_1,\dotsc,s_d)$ is called \emph{admissible} if $s_1>1$.

Euler sums, also known as alternating MZVs, are defined as follows: for $\bfs=(s_1,\dotsc,s_d)\in\N^d$, and $\bfz=(z_1,\dotsc,z_d)\in\{\pm 1\}^d$,
\begin{equation}\label{equ:defnMPL}
\z(s_1,\dotsc,s_d;z_1,\dotsc,z_d):=\sum_{n_1>\cdots>n_d>0}
\frac{z_1^{n_1}\dots z_r^{n_r}}{n_1^{k_1} \dots n_d^{k_d}}
\end{equation}
which converges if and only if $(s_1,z_1)\ne (1,1)$, in which case we call $(\bfs;\bfz)$ \emph{admissible}. It is well-known (see, for e.g., \cite[p. 17]{Zhao2016}) that it has an iterated integral expression
\begin{equation}\label{equ:MPLInt}
\z(s_1,\dotsc,s_d;z_1,\dots,z_d)
=\int_0^1 \left(\frac{d t}{t}\right)^{s_1-1}\frac{d t}{a_1-t} \cdots
\left(\frac{d t}{t}\right)^{s_d-1}\frac{d t}{a_d-t}.
\end{equation}
where $a_k=\prod_{j=1}^k z_j$ for all $k\le d$.
To save space, we put a bar on top of $s_j$ if $z_j=-1$. For example,
\begin{equation*}
\z(\bar2,3,\bar1,4)=\z(2,3,1,4;-1,1,-1,1).
\end{equation*}
It is clear that MZVs are just the special case of Euler sums when all $z_j=1$; namely, no bars can appear.

One of the most important features of Euler sums is that they satisfy many standard relations
(see, for example, \cite[\S 13.3]{Zhao2016}). Among them, the (regularized) double shuffle relations, which are consequences
of the two types of expressions in \eqref{equ:defnMPL} and \eqref{equ:MPLInt},
play a key role in the study of the $\Q$-linear relations among these values.

\section{Regularizations of Euler sums and their generating functions}
Let $M$ be a large positive integer and $\eps>0$ be a very small number. For any composition $\bfs=(s_1,\dots,s_d)$
and $z_1,\dotsc,z_d=\pm 1$, we consider two variations of the Euler sums defined by \eqref{equ:defnMPL} and \eqref{equ:MPLInt}, respectively. First, set
\begin{equation}\label{equ:MPLMForm}
\z^{(M)}(\bfs;\bfz):=\sum_{M\ge n_1>\cdots>n_d>0}
\frac{z_1^{n_1}\dots z_r^{n_r}}{n_1^{s_1} \dots n_d^{s_d}}
\end{equation}
and
\begin{equation}\label{equ:IepsForm}
I^{(\eps)}(\bfs;\bfz):=\int_0^{1-\eps} \left(\frac{d t}{t}\right)^{s_1-1}\frac{d t}{a_1-t} \cdots
\left(\frac{d t}{t}\right)^{s_d-1}\frac{d t}{a_d-t}
\end{equation}
where $a_i=\prod_{j=1}^i z_j^{-1}$. Then $\z^{(M)}$ satisfies the stuffle product
\begin{equation} \label{equ:MPLstuffleDdep2}
\z^{(M)}(s_1;z_1)\z^{(M)}(s_2;z_2)=\z^{(M)}(s_1+s_2;z_1z_2) +
\z^{(M)}(s_1,s_2;z_1,z_2) + \z^{(M)}(s_2,s_1;z_2,z_1).
\end{equation}
On the other hand, by shuffle product of the iterated integrals we see that
\begin{align}
 \label{equ:2Riem1RiemProddep2}
I^{(\eps)}(s_1;z_1)I^{(\eps)}(s_2;z_2)=
\sum_{\substack{t_1,t_2\ge 1\\ t_1+t_2=s_1+s_2}}
    \Bigg[ \binom{t_1-1}{s_2-1} I^{(\eps)}_{t_1,t_2}\Big(z_2,\frac{z_1}{z_2}\Big)
        +  \binom{t_1-1}{s_1-1} I^{(\eps)}_{t_1,t_2}\Big(z_1,\frac{z_2}{z_1}\Big)\Bigg].
\end{align}

It is well-known that for admissible $(\bfs;\bfz)$ we have
\begin{equation*}
\z(\bfs;\bfz)=\lim_{M\to\infty}\z^{(M)}(\bfs;\bfz)=\lim_{\eps\to 0^+} I^{(\eps)}(\bfs;\bfz).
\end{equation*}
Therefore one can derive the so-called double shuffle relations using the two different product structures; namely the stuffle and shuffle products. Then by the usual regularization process one can discover the extremely useful regularized double shuffle relation. Briefly speaking, for every admissible and non-admissible $(\bfs;\bfz)$ of level $N$ there are two polynomials of $T$, denoted by $\z_*(\bfs;\bfz)$ ($*$-regularzed) and $\z_\sha(\bfs;\bfz)$ ($\sha$-regularzed), such that

\begin{enumerate}[label=(\bf{DBSF\arabic*}),leftmargin=2cm]
  \item $\z_*(\bfs;\bfz)\z_*(\bfs';\bfz')$ can be expressed as a $\Q$-linear combination of $*$-regularized colored MZVs of weight $|\bfs|+|\bfs'|$ and level $N$ using the stuffle product.
  \item  $\z_\sha(\bfs;\bfz)\z_\sha(\bfs';\bfz')$ can be expressed as a $\Q$-linear combination of $\sha$-regularized colored MZVs of weight $|\bfs|+|\bfs'|$ and level $N$ using the shuffle product.
  \item \label{page:DBSF3} There is an explicitly defined $\R$-linear map $\rho$ such that $\rho\circ\z_*=\z_\sha$ satisfying $\rho(T)=T$ and $\rho(T^2)=T^2+\z(2).$
\end{enumerate}
We will not go into the details of this theory, instead, we would like to refer the interested reader to \S 13.3.1 of the book \cite{Zhao2016}.

For any fixed alternating signs $\bfz=(z_1,\dotsc,z_d)$ we set $\sgn(\bfz)=(\sgn(z_1),\dotsc,\sgn(z_d))$
and define the generating functions
\begin{equation}\label{equ:defnF}
F_\sharp^{\sgn(\bfz)}(\bfx):=
\sum_{s_1,\dotsc,s_d\ge 1} \z_\sharp(\bfs;\bfz) x_1^{s_1-1}\dotsm x_d^{s_d-1},
\end{equation}
where $\sharp=*$ or $\sha$, $\bfx=(x_1,\dotsc,x_d)$ and $\z_\sharp$ denotes the $\sharp$-regularized value.

\section{Depth 2 weighted sum formulas}
By multiplying $x^{s_1-1}y^{s_2-1}$ on \eqref{equ:MPLstuffleDdep2} and \eqref{equ:2Riem1RiemProddep2}, taking the sum for all $s_1,s_2\in\N$, and specializing at $(z_1,z_2)=(1,1), (1,-1)$ and $(-1,-1)$, respectively, we get after applying the regularization process:
\begin{align*}
\z_\sharp(a)\z_\sharp(b):&\, F_\sha^{+,+}(x+y,y)+ F_\sha^{+,+}(x+y,x)=F_*^{+,+}(x,y)+F_*^{+,+}(y,x)+\frac{F_*^{+}(x)-F_*^{+}(y) }{x-y}, \\
\z_\sharp(a)\z(\bar{b}):&\,  F_\sha^{+,-}(x+y,y)+ F_\sha^{-,-}(x+y,x)=F_*^{+,-}(x,y)+F_*^{-,+}(y,x)+\frac{F_*^{-}(x)-F_*^{-}(y) }{x-y}, \\
\z(\bar{a})\z(\bar{b}):&\,   F_\sha^{-,+}(x+y,y)+ F_\sha^{-,+}(x+y,x)=F_*^{-,-}(x,y)+F_*^{-,-}(y,x)+\frac{F_*^{+}(x)-F_*^{+}(y) }{x-y}.
\end{align*}
Here we need to remark that $\z_\sha(s)=\z_*(s)$ for all $s\in\N$.
Now, replacing $(x,y)$ by $(xt,yt)$ and then comparing the coefficient for $t^{w-2}$ ($w\ge 3$) we immediately
derive the following results. To save space, we set
\begin{equation*}
 \sum =\sum_{a+b=w,a,b\in\N} \quad \text{or} \quad \sum =\sum_{a+b+c=w,a,b,c\in\N}.
\end{equation*}
On the other hand, $\sum{}'$ means that we remove all those terms with $a=1$, i.e.,
\begin{equation*}
 \sum{}' =\sum_{a+b=w,a\ge 2} \quad \text{or} \quad \sum{}' =\sum_{a+b+c=w,a\ge 2}.
\end{equation*}

\begin{prop} \label{prpp:FEdep2}
For any fixed $w\ge 3$, set $f_w(x,y)=\sum_{j=0}^{w-2} x^j y^{w-2-j}$. Then we have
\begin{align*}
\sum \z_\sha(a,b)(x+y)^{\tla}(y^{\tlb}+x^{\tlb})
 =&\,\sum\z_*(a,b)(x^{\tla}y^{\tlb}+y^{\tla}x^{\tlb})+\z(w)f_w(x,y), \\
\sum (x+y)^{\tla}\big(\z_\sha(a,\bar{b})y^{\tlb}+\z(\bar{a},\bar{b})x^{\tlb}\big)
 =&\,\sum \big(\z_*(a,\bar{b})x^{\tla}y^{\tlb}+\z(\bar{a},b)y^{\tla}x^{\tlb}\big)+\z(\bar w)f_w(x,y),  \\
\sum \z(\bar{a},b)(x+y)^{\tla}(y^{\tlb}+x^{\tlb})
 =&\,\sum\z(\bar{a},\bar{b})(x^{\tla}y^{\tlb}+y^{\tla}x^{\tlb})+\z(w)f_w(x,y).
\end{align*}
Here and in the rest of this paper, we set ${\tla}=a-1$, ${\tlb}=b-1$.
\end{prop}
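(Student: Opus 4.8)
The plan is to read off Proposition~\ref{prpp:FEdep2} from the three generating-function identities displayed just above it, via the homogenization substitution $(x,y)\mapsto(xt,yt)$ followed by extraction of the coefficient of $t^{w-2}$. Since the proposition is immediate once those master identities are in hand, I first recall how they arise, as that is where the substance lies. Multiplying the stuffle relation \eqref{equ:MPLstuffleDdep2} by $x^{s_1-1}y^{s_2-1}$ and summing over $s_1,s_2\ge 1$ turns the left side into the product $F_*^{\sgn(z_1)}(x)\,F_*^{\sgn(z_2)}(y)$ of depth-one generating functions; on the right the two off-diagonal terms become $F_*^{\sgn(z_1),\sgn(z_2)}(x,y)$ and $F_*^{\sgn(z_2),\sgn(z_1)}(y,x)$, while the diagonal term $\z^{(M)}(s_1+s_2;z_1z_2)$ contributes $\frac{F_*^{\sgn(z_1z_2)}(x)-F_*^{\sgn(z_1z_2)}(y)}{x-y}$ after using $\sum_{s_1+s_2=s}x^{s_1-1}y^{s_2-1}=\frac{x^{s-1}-y^{s-1}}{x-y}$ and passing to the regularized limit $M\to\infty$.

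Applying the same operations to the shuffle relation \eqref{equ:2Riem1RiemProddep2} reproduces the identical product on the left, while on the right the binomial coefficients collapse through the elementary identity
\[
\sum_{\substack{s_1+s_2=t_1+t_2\\ s_1,s_2\ge 1}}\binom{t_1-1}{s_2-1}x^{s_1-1}y^{s_2-1}=x^{t_2-1}(x+y)^{t_1-1},
\]
so that the two binomial terms of \eqref{equ:2Riem1RiemProddep2} produce, in the limit $\eps\to 0$, the shifted generating functions $F_\sha^{\sgn(z_2),\sgn(z_1/z_2)}(x+y,x)$ and $F_\sha^{\sgn(z_1),\sgn(z_2/z_1)}(x+y,y)$. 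Equating the stuffle and shuffle expressions for the common product, and specializing $(z_1,z_2)$ to $(1,1),(1,-1),(-1,-1)$ — using $\z_\sha(s)=\z_*(s)$ so that the depth-one generating functions agree on both sides — yields exactly the three displayed master identities.

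Granting these, I substitute $(x,y)\mapsto(xt,yt)$. Each summand $\z(a,b)$ times a monomial of degree $a+b-2$ in $(x,y)$ is homogeneous of that degree, so it acquires precisely the factor $t^{a+b-2}$; here $(xt)+(yt)=(x+y)t$ ensures the shifted argument $x+y$ homogenizes the same way. Comparing the coefficient of $t^{w-2}$ therefore isolates the weight-$w$ slice $a+b=w$ of every generating function: $F_\sha^{+,+}(x+y,y)+F_\sha^{+,+}(x+y,x)$ becomes $\sum_{a+b=w}\z_\sha(a,b)(x+y)^{\tla}(y^{\tlb}+x^{\tlb})$, and likewise for the remaining $F$'s, which assembles the two double sums on each side of all three formulas.

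The only term requiring care is the diagonal difference quotient. After substitution it equals $\frac1t\sum_{a\ge 1}\z_*(a)\frac{x^{a-1}-y^{a-1}}{x-y}\,t^{a-1}$; the $a=1$ summand vanishes since $x^0-y^0=0$, so the single divergent value $\z_*(1)=T$ drops out and no regularization constant survives, while for $a\ge2$ we have $\frac{x^{a-1}-y^{a-1}}{x-y}=\sum_{j=0}^{a-2}x^jy^{a-2-j}$ with $\z_*(a)=\z(a)$ convergent. Its coefficient of $t^{w-2}$ is thus $\z(w)f_w(x,y)$ (respectively $\z(\bar w)f_w(x,y)$ when the diagonal sign is $-$), matching the boundary terms. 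I expect the main obstacle to lie not here but in the derivation of the master identities — specifically the sign bookkeeping matching $\sgn(z_1/z_2)$ and $\sgn(z_2/z_1)$ against the bar-conventions of $F_\sha$, together with the justification that the formal generating-function identities persist through the regularized limits $M\to\infty$ and $\eps\to0^+$. Once that is secured, the homogenization and coefficient comparison are routine, the sole subtlety being the vanishing of the $a=1$ term that keeps $T$ out of the final identities.
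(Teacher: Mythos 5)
Your proposal is correct and follows essentially the same route as the paper: the paper likewise derives the three generating-function identities by multiplying \eqref{equ:MPLstuffleDdep2} and \eqref{equ:2Riem1RiemProddep2} by $x^{s_1-1}y^{s_2-1}$, summing, specializing the signs, and then obtains the proposition by replacing $(x,y)$ with $(xt,yt)$ and comparing coefficients of $t^{w-2}$. Your additional checks (the binomial collapse on the shuffle side and the vanishing of the $a=1$ diagonal term, which keeps the regularization variable $T$ out of the final identities) are correct and only make explicit what the paper leaves implicit.
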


\begin{thm}\label{thm:sumzab}
Let $w\ge 3$ and $v=w-1$. Then we have
\begin{align*}
\sum{}' \z(a,b)=&\, \z(w), \\
\sum{}' \z(\bar{a},\bar{b})=&\, \z(\bar1,v)-\z(\bar1,\bar{v})+\z(\bar w), \\
\sum{}' \z(\bar{a},b) =&\, \z(\bar{v},\bar1)+\z(\bar1,\bar{v})-\z(\bar{v},1)-\z(\bar1,v)+\z(w),\\
\sum{}' \z(a,\bar{b})=&\, \z(\bar{v},1)- \z(\bar{v},\bar1)+\z(\bar w).
\end{align*}
\end{thm}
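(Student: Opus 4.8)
The plan is to read off all four identities from the three generating-function relations of Proposition~\ref{prpp:FEdep2} by comparing, in each homogeneous degree-$(w-2)$ polynomial identity, a single \emph{extreme} coefficient, namely that of $y^{w-2}$ or of $x^{w-2}$. The mechanism is an asymmetry between the two sides. In a left-hand factor of the shape $(x+y)^{\tla}(y^{\tlb}+x^{\tlb})$, the coefficient of $y^{w-2}$ equals $1$ for every $(a,b)$ with $a+b=w$ (with one extra unit coming from the corner $b=1$), since $(x+y)^{\tla}$ contributes its pure $y^{\tla}$ monomial and $y^{\tlb}$ supplies the remaining degree; hence the whole left sum collapses to the \emph{unweighted} sum $\sum\z_\sharp(a,b)$ plus one boundary term. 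By contrast, in a right-hand ``diagonal'' factor $x^{\tla}y^{\tlb}+y^{\tla}x^{\tlb}$ the same coefficient survives only at the corners $a=1$ or $b=1$, so the right side collapses to those corner values together with the $\z(w)$- or $\z(\bar w)$-term, since $f_w(x,y)$ contributes coefficient $1$ at either extreme.

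Carrying this out, I would take the coefficient of $y^{w-2}$ in the first relation to obtain $\sum{}'\z(a,b)$, the coefficient of $x^{w-2}$ in the second relation to obtain $\sum{}'\z(\bar a,\bar b)$, the coefficient of $y^{w-2}$ in the third relation to obtain $\sum{}'\z(\bar a,b)$, and the coefficient of $y^{w-2}$ in the second relation to obtain $\sum{}'\z(a,\bar b)$. In each case the full left sum $\sum\z_\sharp(a,b)$ splits into the restricted sum $\sum{}'$, whose terms all have $a\ge 2$ and are therefore admissible (so that $\z_\sharp=\z$ there), plus the single $a=1$ term; the right side simultaneously produces exactly the corner values recorded in the statement. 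The third relation carries a bar in the leading slot throughout, so every quantity involved is a genuinely convergent Euler sum and the formula $\sum{}'\z(\bar a,b)=\z(\bar v,\bar1)+\z(\bar1,\bar v)-\z(\bar v,1)-\z(\bar1,v)+\z(w)$ falls out with no regularization at all. For the second formula the doubled corner $\z(v,\bar1)$ is admissible and occurs identically on both sides (once as $\z_\sha(v,\bar1)$, once as $\z_*(v,\bar1)$), so it cancels and yields $\sum{}'\z(\bar a,\bar b)=\z(\bar1,v)-\z(\bar1,\bar v)+\z(\bar w)$ cleanly; likewise in the first formula the admissible corner $\z(v,1)$ cancels.

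The only genuine obstacle is a leftover regularized discrepancy in the first and fourth formulas: the extraction produces them in the forms $\sum{}'\z(a,b)=\z(w)+\z_*(1,v)-\z_\sha(1,v)$ and $\sum{}'\z(a,\bar b)=\z(\bar v,1)-\z(\bar v,\bar1)+\z(\bar w)+\z_*(1,\bar v)-\z_\sha(1,\bar v)$, so I must show $\z_*(1,v)=\z_\sha(1,v)$ and $\z_*(1,\bar v)=\z_\sha(1,\bar v)$. The key point is that the single divergence sits in the first slot, so each of these regularized values is a polynomial of degree at most $1$ in $T$: the stuffle product of $\z_*(1)=T$ with $\z_*(v)=\z(v)$ gives $\z_*(1,v)=\z(v)\,T-\z(v,1)-\z(w)$, and the analogous stuffle product of $\z_*(1)$ with $\z_*(v;-1)=\z(\bar v)$ gives $\z_*(1,\bar v)=\z(\bar v)\,T-\z(\bar v,1)-\z(\bar w)$. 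By property~\ref{page:DBSF3} the comparison map $\rho$ satisfies $\rho(T)=T$ and fixes constants, hence acts as the identity on polynomials of degree $\le 1$; applying $\rho$ to the two displayed expressions yields $\z_\sha(1,v)=\z_*(1,v)$ and $\z_\sha(1,\bar v)=\z_*(1,\bar v)$, which annihilates both discrepancies.

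I expect the conceptual crux to be precisely this last observation, that $\rho$ acts trivially whenever the only divergence produces a value linear in $T$; everything else is bookkeeping. The part most prone to error is that bookkeeping itself: choosing the correct extreme monomial for each of the four targets, and correctly tracking the single doubled contribution at the corner $b=1$, which is what supplies the cancelling admissible terms in the first and second formulas and the surviving boundary values $\z(\bar v,1)$, $\z(\bar v,\bar1)$, $\z(\bar1,v)$ in the third and fourth.
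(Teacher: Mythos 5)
Your proposal is correct and is essentially the paper's own argument: extracting the coefficient of $y^{w-2}$ (resp.\ $x^{w-2}$) from the homogeneous degree-$(w-2)$ identities of Proposition~\ref{prpp:FEdep2} is exactly the paper's specialization at $(x,y)=(0,1)$ (resp.\ $(1,0)$). You merely supply the bookkeeping the paper leaves implicit, including the correct observation that $\z_*(1,v)=\z_\sha(1,v)$ and $\z_*(1,\bar v)=\z_\sha(1,\bar v)$ because both regularizations are polynomials of degree at most one in $T$ and $\rho$ fixes such polynomials.
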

\begin{proof}
We can prove these by taking $x=1, y=0$ and $x=0, y=1$ in Proposition \ref{prpp:FEdep2}.
\end{proof}

This immediately implies the following corollary about double $T$-values.
\begin{cor} \label{cor:MMVSumFormulaDepth2}
Let $w\ge 3$ and $v=w-1$.  Then we have
\begin{align}\label{equ:DTVsumFormula}
\sum_{a+b=w,a\ge 2} T(a,b)=&\, 2\Big(\z(w)-\z(\bar w)+\z(\bar{v},\bar1)+\z(\bar1,\bar{v})-\z(\bar{v}, 1)-\z(\bar1,v) \Big).
\end{align}
\end{cor}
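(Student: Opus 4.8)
The plan is to deduce this directly from Theorem~\ref{thm:sumzab} by expressing each double $T$-value as a $\Q$-linear combination of Euler sums. First I would specialize the defining formula \eqref{equ:defnMTV} to depth $d=2$: since $(-1)^2=1$, the weight factor is $(1+(-1)^{n_1})(1-(-1)^{n_2})$, and expanding this product gives
\begin{equation*}
(1+(-1)^{n_1})(1-(-1)^{n_2}) = 1 - (-1)^{n_2} + (-1)^{n_1} - (-1)^{n_1+n_2}.
\end{equation*}
Matching each of the four monomials against the sign conventions in \eqref{equ:defnMPL} (a bar on $s_j$ records $z_j=-1$), this yields the decomposition
\begin{equation*}
T(a,b) = \z(a,b) - \z(a,\bar b) + \z(\bar a, b) - \z(\bar a, \bar b),
\end{equation*}
valid for every $a \ge 2$, which is exactly the admissibility constraint $s_1>1$ built into the definition of $T$.

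Next I would sum this identity over all $a+b=w$ with $a\ge 2$, which splits the left-hand side into precisely the four primed sums appearing in Theorem~\ref{thm:sumzab}:
\begin{equation*}
\sum{}' T(a,b) = \sum{}' \z(a,b) - \sum{}' \z(a,\bar b) + \sum{}' \z(\bar a, b) - \sum{}' \z(\bar a, \bar b).
\end{equation*}
Substituting the four closed forms from that theorem and collecting like terms, the contributions to $\z(w)$ and to $\z(\bar w)$ each double up with opposite signs, giving $2\z(w)-2\z(\bar w)$, while the four length-two Euler sums $\z(\bar v,1)$, $\z(\bar v,\bar 1)$, $\z(\bar 1,v)$, $\z(\bar 1,\bar v)$ each occur twice with the signs recorded in the statement. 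This produces the overall factor of $2$ and the claimed right-hand side.

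The deduction is essentially mechanical once Theorem~\ref{thm:sumzab} is in hand, so there is no serious obstacle; the only place to exercise care is the sign bookkeeping in the expansion of the weight factor, together with the verification that every Euler sum occurring here is admissible. The latter holds because a bar on the leading index (as in $\z(\bar v,1)$ and $\z(\bar 1,v)$, where the first index can be $1$) already guarantees convergence, while the two unbarred-leading sums carry $a\ge 2$. A final check worth making is that no $\z_*$- or $\z_\sha$-regularized (that is, $T$-dependent) pieces survive: all four right-hand sides in Theorem~\ref{thm:sumzab} are genuine convergent Euler sums, so the resulting combination is an honest identity among convergent values rather than a regularized one.
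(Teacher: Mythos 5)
Your proposal is correct and follows essentially the same route as the paper: it decomposes $T(a,b)=\z(a,b)+\z(\bar a,b)-\z(a,\bar b)-\z(\bar a,\bar b)$, sums over $a+b=w$ with $a\ge 2$, and substitutes the four sum formulas of Theorem~\ref{thm:sumzab}; the sign bookkeeping and the resulting factor of $2$ check out. The only difference is that you spell out the derivation of the decomposition from \eqref{equ:defnMTV} and the convergence check, which the paper leaves implicit.
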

\begin{proof} We have
\begin{align*}
\sum{}' T(a,b)=
\sum{}'\Big (\z(a,b)+\z(\bar{a},b)-\z(a,\bar{b}) -\z(\bar{a},\bar{b})\Big)
\end{align*}
which reduces to \eqref{equ:DTVsumFormula} quickly by Theorem~\ref{thm:sumzab}.
\end{proof}

With this approach we can now easily recover \cite[Thm. 3.2]{KanekoTs2019}. Notice that
in the proof, we find the depth 2 weighted sum formulas for Euler sums which are analogs of the MZV formula
first discovered by Ohno and Zulidin \cite{OhnoZu2008} and later generalized to
arbitrary depth by Guo and Xie \cite{GuoXi2009}.
\begin{thm}\label{thm:MVT2prWtedSum}
For all $w\ge 3$ we have
\begin{align*}
\sum_{a+b=w,a\ge 2} 2^{a-1} T(a,b)=&\, (w-1)(\z(w)-\z(\bar w))= (w-1) T(w).
\end{align*}
\end{thm}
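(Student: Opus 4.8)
The plan is to introduce the weight $2^{a-1}$ by specializing the three displayed identities of Proposition~\ref{prpp:FEdep2} at $x=y=1$, and then to assemble the decomposition $T(a,b)=\z(a,b)+\z(\bar a,b)-\z(a,\bar b)-\z(\bar a,\bar b)$ recorded in the proof of Corollary~\ref{cor:MMVSumFormulaDepth2}, together with $T(w)=\z(w)-\z(\bar w)$. At $x=y=1$ one has $(x+y)^{\tla}=2^{a-1}$, $x^{\tlb}+y^{\tlb}=2$, $x^{\tla}y^{\tlb}=y^{\tla}x^{\tlb}=1$ and $f_w(1,1)=w-1$, so each identity becomes a relation carrying the factor $2^{a-1}$ on the left and an unweighted sum on the right. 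Since the identities of Proposition~\ref{prpp:FEdep2} are already the coefficient of $t^{w-2}$, i.e. honest homogeneous polynomial identities in $x,y$, the substitution $x=y=1$ is entirely legitimate and introduces no analytic subtlety.

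First I would form the combination (first identity) $+$ (third identity) $-\,2\cdot$(second identity), evaluated at $x=y=1$. On the left this produces exactly
\[
2\sum_{a+b=w}2^{a-1}\bigl(\z_\sha(a,b)+\z(\bar a,b)-\z_\sha(a,\bar b)-\z(\bar a,\bar b)\bigr),
\]
whose terms with $a\ge 2$ are convergent, so that $\z_\sha(a,b)=\z(a,b)$, $\z_\sha(a,\bar b)=\z(a,\bar b)$ and each summand equals $2\cdot 2^{a-1}T(a,b)$; they contribute $2\sum{}'2^{a-1}T(a,b)$. On the right the same combination yields
\[
2\sum_{a+b=w}\bigl(\z_*(a,b)+\z(\bar a,\bar b)-\z_*(a,\bar b)-\z(\bar a,b)\bigr)+2(w-1)\bigl(\z(w)-\z(\bar w)\bigr).
\]
After dividing by $2$, the explicit term is already $(w-1)T(w)$, and it remains to match the right-hand sum against the boundary $a=1$ contribution left over on the left.

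The only genuine work is the bookkeeping of the non-admissible $a=1$ terms (here $b=v:=w-1$). For $a\ge 2$ every entry is convergent, so I would split the right-hand sum into its $a\ge 2$ part, evaluate that directly with Theorem~\ref{thm:sumzab}, and observe that a short cancellation collapses it to $2\z(\bar 1,v)-2\z(\bar 1,\bar v)$; adding the $a=1$ part and cancelling the convergent double sums $\z(\bar1,v),\z(\bar1,\bar v)$ that then appear on both sides reduces the whole identity to
\[
\sum{}'2^{a-1}T(a,b)=(w-1)T(w)+\bigl(\z_*(1,v)-\z_\sha(1,v)\bigr)-\bigl(\z_*(1,\bar v)-\z_\sha(1,\bar v)\bigr).
\]
Thus the theorem follows once I verify the regularization lemma $\z_\sha(1,s;1,z)=\z_*(1,s;1,z)$ for $z=\pm1$. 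This is exactly where property~\ref{page:DBSF3} enters: since the argument $(1,s)$ has a single non-admissible slot, both regularized values are polynomials of degree $\le 1$ in $T$, and the explicit $\R$-linear map $\rho$, which satisfies $\rho(T)=T$ and fixes the constants, restricts to the identity on such polynomials; hence $\z_\sha(1,s)=\rho(\z_*(1,s))=\z_*(1,s)$, and both bracketed differences above vanish. I expect the main obstacle to be precisely this degree-one triviality of $\rho$ and the accompanying check that the $T$-dependence cancels so that both sides are honest numbers, whereas forming the linear combination and invoking Theorem~\ref{thm:sumzab} are routine.
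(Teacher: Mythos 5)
Your proof is correct and follows essentially the same route as the paper: specializing Proposition~\ref{prpp:FEdep2} at $x=y=1$ and invoking the sum formulas of Theorem~\ref{thm:sumzab}, differing only in that you form the $T$-value combination $(\mathrm{I})+(\mathrm{III})-2(\mathrm{II})$ before evaluating the unweighted sums rather than evaluating each of the three identities separately and combining afterwards. Your explicit check that $\z_\sha(1,s;1,z)=\z_*(1,s;1,z)$ via $\rho(T)=T$ handles the $a=1$ boundary terms that the paper leaves implicit at depth $2$ (it records the analogous fact only for depth $1$ and, in Lemma~\ref{lem:relSharp}, for depth $3$), which is a welcome but not substantive addition.
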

\begin{proof}
Set $\sum{}'=\sum{}_{a+b=w,a\ge 2}$. Taking $x=1, y=1$ in Proposition \ref{prpp:FEdep2} we get
\begin{align}\label{equ:MVT2prWtedSum}
\sum{}' 2^{\tla}\z(a,b)=&\, \sum{}' \z(a,b)+\frac{w-1}{2}\z(w)=\frac{w+1}{2}\z(w),  \\
\sum{}' 2^{\tla} \big(\z(a,\bar{b})+\z(\bar{a},\bar{b})\big)=&\,
\sum{}'\big(\z(a,\bar{b})+\z(\bar{a},b)\big)+(w-1)\z(\bar w) 
=\z(w)+w\z(\bar w), \notag \\
\sum{}' 2^{\tla}\z(\bar{a},b) =&\,\sum{}' \z(\bar{a},\bar{b})+\frac{w-1}{2}\z(w)
=\z(\bar w)+\frac{w-1}{2}\z(w),  \notag
\end{align}
using the sum formulas in Theorem \ref{thm:sumzab}.
The theorem follows immediately.
\end{proof}

\section{Depth 3 weighted sum formulas, Part A}
As in the depth 2 case, we may derive many identities by the generating functions of the triple Euler sums.
For depth 3, there are two possible ways to produce functional equations of $F_\sharp^{\sgn(\bfz)}(\bfx)$.
We may consider either

\begin{enumerate}[label={(\bf\Alph*)}]
  \item products of double logarithms with single logarithms, denoted by $\z^\sharp_{s_1,s_2}(z_1,z_2) \z^\sharp_{s_3}(z_3)$, or
  \item products of three logarithms, $\z^\sharp_{s_1}(z_1)\z^\sharp_{s_2}(z_2) \z^\sharp_{s_3}(z_3)$.
\end{enumerate}

We start by dealing with case (A) in this section. Observe that
\begin{multline} \label{equ:MPLstuffleDdep3}
\z^{(M)}(s_1,s_2;z_1,z_2)\z^{(M)}(s_3;z_3)=\z^{(M)}(s_1+s_3,s_2;z_1z_3,z_2) + \z^{(M)}(s_1,s_2+s_3;z_1,z_2z_3)\\
\z^{(M)}(s_1,s_2,s_3;z_1,z_2,z_3) + \z^{(M)}(s_1,s_3,s_2;z_1,z_3,z_2) + \z^{(M)}(s_3,s_1,s_2;z_3,z_1,z_2)
\end{multline}
On the other hand, by shuffle product of iterated integrals we see that
\begin{align}
 \label{equ:2Riem1RiemProddep3}
I^{(\eps)}(s_1,s_2;z_1,z_2)I^{(\eps)}(s_3;z_3)=&\,
\sum_{\substack{t_1\ge 2,t_2\ge 1\\ t_1+t_2=s_1+s_3}} \binom{t_1-1}{s_3-1}I^{(\eps)}\Big(t_1,t_2,s_2;z_3,\frac{z_1}{z_3},z_2\Big)\\
+&\, \sum_{\substack{t_1\ge 2,t_2,t_3\ge 1\\ t_1+t_2+t_3 \\  =s_1+s_2+s_3 }}
\binom{t_1-1}{s_1-1} \binom{t_2-1}{s_2-t_3}  I^{(\eps)}\Big(t_1,t_2,t_3;z_1,\frac{z_3}{z_1},\frac{z_1z_2}{z_3}\Big)\\
+&\, \sum_{\substack{t_1\ge 2,t_2,t_3\ge 1\\ t_1+t_2+t_3 \\  =s_1+s_2+s_3 }}
\binom{t_1-1}{s_1-1} \binom{t_2-1}{s_2-1} I^{(\eps)}\Big(t_1,t_2,t_3;z_1,z_2,\frac{z_3}{z_2}\Big).
\end{align}
To save space, we set $\z_\sharp\Big({\bfs \atop \bfz}\Big):=\z_\sharp(\bfs;\bfz)$.
By the usual regularization process \eqref{equ:MPLstuffleDdep3} and \eqref{equ:2Riem1RiemProddep3}
easily lead to the following functional equations for any weight $w\ge 3$ in view of Lemma \ref{lem:relSharp}.
\begin{align}
& \frac{\delta}2\z(2)\z_*\Big({{w-2}\atop{z_3}}\Big)z^{w-3}+\sum_{b+c=w} \Bigg[\left(\frac{x^{\tlb}-z^{\tlb}}{x-z} \right)y^{\tlc}
\z_*\Big({{\ \, b\, \ ,\, c\, }\atop{z_1z_3,z_2}}\Big)
+ x^{\tlb}\left(\frac{y^{\tlc}-z^{\tlc}}{y-z} \right) \z_*\Big({{ b\, ,\ \, c\, \ }\atop{z_1,z_2z_3}}\Big)\Bigg] \notag\\
& +\sum_{a+b+c=w}\Bigg[
 x^{\tla}y^{\tlb}z^{\tlc} \z_*\Big({{\, a\, ,\, b\, ,\, c\, }\atop{z_1,z_2,z_3}}\Big)
+x^{\tla}z^{\tlb}y^{\tlc} \z_*\Big({{\, a\, ,\, b\, ,\, c\, }\atop{z_1,z_3,z_2}}\Big)
+z^{\tla}x^{\tlb}y^{\tlc} \z_*\Big({{\, a\, ,\, b\, ,\, c\, }\atop{z_3,z_1,z_2}}\Big)\Bigg]\notag\\
& =\sum_{a+b+c=w} \Bigg[(x+z)^{\tla} (y+z)^{\tlb}y^{\tlc} \z_\sha\Big({{\, a\, ,\ \, b\, \ ,\ \, c\, \ }\atop{z_1,z_3/z_1,z_1z_2/z_3}}\Big)\notag\\
& \hskip2cm+  (x+z)^{\tla} (y+z)^{\tlb}z^{\tlc} \z_\sha\Big({{\, a\, ,\, b\, ,\ \, c\, \ }\atop{z_1,z_2,z_3/(z_1z_2)}}\Big)
+ (x+z)^{\tla} x^{\tlb}y^{\tlc} \z_\sha\Big({{\, a\, ,\ \, b\, \ ,\, c\, }\atop{z_3,z_1/z_3,z_2}}\Big)\Bigg],
\label{equ:FExyzDtimesSingle}
\end{align}
where $\delta=1$ if $z_1=z_2=1$, and  $\delta=0$ otherwise.
This yields eight cases by different combinations of $z_1,z_2,z_3=\pm1$.

The following lemma will help us to simplify the above formula when regularized values appear.
\begin{lem}\label{lem:relSharp}
Suppose $a,b,c\in\db$ with $|a|+|b|+|c|=w\ge 4$. Then for all $(a,b)\ne (1,1)$ we have
\begin{equation*}
    \z_\sha(a,b,c)=\z_*(a,b,c).
\end{equation*}
Further, for $c=w-2$ or $c=\ol{w-2}$
\begin{equation*}
    \z_\sha(1,1,c)=\z_*(1,1,c)+\frac12 \z(2)\z(c).
\end{equation*}
\end{lem}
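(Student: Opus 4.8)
The plan is to use the comparison map $\rho$ from property \ref{page:DBSF3}, which satisfies $\z_\sha=\rho\circ\z_*$, $\rho(T)=T$, $\rho(T^2)=T^2+\z(2)$, and is the identity on the ring of (convergent) constants. Everything then reduces to controlling the degree in $T$ of the $*$-regularized polynomial $\z_*(a,b,c)\in\R[T]$. The structural fact I would invoke is that this degree equals the length of the maximal leading run of letters equal to an unbarred $1$: the only source of divergence in either \eqref{equ:MPLMForm} or \eqref{equ:IepsForm} is a leading unbarred $1$ (a barred $1$ gives the convergent $\sum(-1)^n/n$ and causes none), and the $*$-regularization isolates exactly the variable $T=\z_*(1;+)$, with the top $T$-coefficient obtained by stripping these leading $1$'s. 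This is standard (see \cite[\S13.3.1]{Zhao2016}) and is all the input I need.

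For the first assertion let $(a,b)\ne(1,1)$. If $a$ is not an unbarred $1$ (that is, $a=\ol1$ or $|a|\ge2$), then $(a,b,c)$ is admissible, so $\z_*(a,b,c)=\z_\sha(a,b,c)$ is a constant and there is nothing to prove. If $a$ is an unbarred $1$ but $b$ is not, the leading run has length $1$, so $\z_*(a,b,c)=\alpha T+\beta$ with $\alpha,\beta$ constants; since $\rho$ fixes $T$ and constants, $\z_\sha(a,b,c)=\rho(\alpha T+\beta)=\alpha T+\beta=\z_*(a,b,c)$.

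For the second assertion, $(a,b)=(1,1)$ forces $|c|=w-2\ge2$ (so indeed $c=w-2$ or $\ol{w-2}$) and the leading run has length $2$. Writing $\z_*(1,1,c)=c_2T^2+c_1T+c_0$, the relation $\rho(T^2)=T^2+\z(2)$ gives $\z_\sha(1,1,c)=\z_*(1,1,c)+c_2\,\z(2)$, so it suffices to prove $c_2=\tfrac12\z(c)$. I would obtain this from the stuffle product structure of $\z_*$ (\textbf{DBSF1}) applied to $(1;+)$ and $(1,c)$, which yields
\begin{equation*}
T\cdot\z_*(1,c)=2\,\z_*(1,1,c)+\z_*(1,c,1)+\z_*(1,\widehat c)+\z_*(2,c),
\end{equation*}
where $\widehat c$ denotes the index $|c|+1$ carrying the sign of $c$. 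On the right-hand side only $\z_*(1,1,c)$ has $T$-degree $2$ (the other three terms have leading run length $\le 1$), so the $T^2$-coefficient there is $2c_2$; on the left-hand side $\z_*(1,c)=\z(c)\,T+O(1)$, so the $T^2$-coefficient is $\z(c)$. Comparing gives $2c_2=\z(c)$, as required.

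The computations themselves are short; the genuine content is the structural input in the first paragraph, namely that over the signed alphabet no divergence beyond a leading unbarred $1$ occurs and that the $*$-regularization isolates $T=\z_*(1;+)$ with leading coefficient computed by stripping. Granting this standard regularization theory, the degree counts and the one-line extraction of $c_2=\tfrac12\z(c)$ complete the proof.
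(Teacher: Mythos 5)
Your argument is correct and follows essentially the same route as the paper: both parts reduce to bounding the degree of $\z_*(a,b,c)$ as a polynomial in $T$ and then applying property (DBSF3) via the map $\rho$. The only cosmetic difference is in how the $T^2$-coefficient of $\z_*(1,1,c)$ is extracted --- the paper stuffles $\z_*(1,1)=\frac{T^2-\z(2)}{2}$ against $\z(c)$, while you stuffle $T=\z_*(1)$ against $\z_*(1,c)$; both computations yield the same coefficient $\frac12\z(c)$ and hence the same correction term $\frac12\z(2)\z(c)$.
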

\begin{proof}
We know the regularized values $\z_\sha(a,b,c)$ and $\z_*(a,b,c)$ are either constants or
linear polynomials of $T$ if $(a,b)\ne (1,1)$ since the weight is at least 4.
Hence $\z_\sha(a,b,c)=\z_*(a,b,c)$ as $\rho(T)=T$ by (DBSF3) on page \pageref{page:DBSF3}. On the other hand, for all $c\in\db\setminus\{1\}$,
\begin{equation*}
\z_\sha(1,1,c)=\rho(\z_*(1,1,c))=\rho(\z_*(1,1)\z(c)-f(T))
\end{equation*}
for some linear polynomial $f(T)$. Thus, by (DBSF3)
\begin{equation*}
\z_\sha(1,1,c)=\rho\left(\frac{T^2-\z(2)}{2}\right)\z(c)-f(T)=\frac{T^2}{2}\z(c)-f(T)=\z_*(1,1,c)+\frac12 \z(2)\z(c)
\end{equation*}
as desired.
\end{proof}

\begin{thm}\label{thm:sumzab1}
Let $w\ge 4$, $u=w-2$ and $v=w-1$. Then we have
\begin{align*}
\sum{}' \z(a,b,1)=&\,\z(v,1)+\z(u,2),\\
\sum{}' \z(a,\bar{b},1)=&\,\z(\ol{v},1)+\z(\bar{u},2)+2\z(\bar{u},1,1)-\z(\bar{u},\bar1,\bar1)-\z(\bar{u},1,\bar1),\\
\sum{}' \z(\bar{a},b,1)=&\,\z(v,1)+\z(\bar{u},\bar2)+\z(\bar{u},1,\bar1)+\z(\bar{u},\bar1,1)+\z(\bar1,\bar{u},1)-2\z(\bar{u},1,1)-\z(\bar1,u,1),\\
\sum{}' \z(\bar{a},\bar{b},1)=&\,\z(\ol{v},1)+\z(u,\bar2)+\z(u,\bar1,1)+\z(\bar1,u,1)-\z(u,\bar1,\bar1)-\z(\bar1,\bar u,1),\\
\sum{}' \z(\bar{a},\bar{b},\bar1)=&\,\z(v,\bar1)+\z(\bar u,2)+\z(\bar u,\bar1,\bar1)-\z(\bar u,1,\bar1),\\
\sum{}' \z(a,b,\bar1)=&\,\z(v,\bar1)+\z(u,\bar2)+\z(u,\bar1,1)-\z(u,\bar1,\bar1),\\
\sum{}' \z(a,\bar{b},\bar1)=&\,\z(\ol{v},\bar1)+\z(\bar{u},\bar2)+\z(\bar{u},1,\bar1)-\z(\bar{u},\bar1,1),\\
\sum{}' \z(\bar{a},b,\bar1)=&\,\z(v,\bar1)+\z(\bar{u},2)+\z(\bar{u},\bar1,\bar1)+\z(\bar1,\bar{u},\bar1)-\z(\bar{u},1,\bar1)-\z(\bar1,u,\bar1).
\end{align*}
\end{thm}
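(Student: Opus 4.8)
The plan is to obtain all eight identities from the single master functional equation~\eqref{equ:FExyzDtimesSingle}, in exactly the way Theorem~\ref{thm:sumzab} was extracted from Proposition~\ref{prpp:FEdep2}: for each of the eight sign patterns $(z_1,z_2,z_3)\in\{\pm1\}^3$ I would substitute the boundary point $(x,y,z)=(1,0,0)$. The pattern is matched to the target so that the one surviving summation on the right of~\eqref{equ:FExyzDtimesSingle}, namely the third shuffle term $\sum_{a+b=w-1}\z_\sha(a,b,1;\,\cdot\,)$ whose color is $(z_3,z_1/z_3,z_2)$, reproduces the color of the Euler sum summed on the left-hand side of the asserted formula; its admissible part ($a\ge 2$) is then the target $\sum{}'$.

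Second, I would compute the specialization termwise. Setting $z=0$ annihilates the correction $\tfrac{\delta}{2}\z(2)\z_*(w-2;z_3)z^{w-3}$ since $w\ge 4$. Reading the two difference quotients as the polynomials $\tfrac{x^{\tlb}-z^{\tlb}}{x-z}$ and $\tfrac{y^{\tlc}-z^{\tlc}}{y-z}$ and evaluating at $(1,0,0)$, the first forces $(b,c)=(w-1,1)$ and the second $(b,c)=(w-2,2)$, so these two stuffle-side terms produce directly, in the series convention, the two double values $\z(v,1)$-type and $\z(u,2)$-type that open each formula (with colors $(z_1z_3,z_2)$ and $(z_1,z_2z_3)$); note that because they collapse to single values, no appeal to Theorem~\ref{thm:sumzab} is needed. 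The three triple sums on the left and the first two shuffle terms on the right all collapse, because $y=z=0$ forces two of the three indices to equal $1$, to single triple values with argument string $(w-2,1,1)$ or $(1,w-2,1)$ and various colors.

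Third, I would handle the regularization. The non-admissible ($a=1$) piece of the free sum is the single value $\z_\sha(1,w-2,1;\,\cdot\,)$; since $(1,w-2)\ne(1,1)$ for $w\ge 4$, Lemma~\ref{lem:relSharp} identifies it with the corresponding $\z_*$, after which it combines with the collapsed third left triple sum $\z_*(1,w-2,1;\,\cdot\,)$ --- cancelling it when the two colors agree, and otherwise leaving the ``leading-$1$'' triples $\z(\bar1,\ast,\ast)$ that appear in some of the formulas. For admissible indices all regularized symbols are honest Euler sums, so no $T$-dependence survives. What remains after isolating the target is the two double values together with the signed combination of the four collapsed triples: in the all-positive case these four coincide and cancel, leaving $\z(v,1)+\z(u,2)$, while for the signed patterns they carry different colors and their non-cancelling remainder is precisely the list of triple values in the other seven formulas.

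The genuine difficulty, and the step I expect to be the main obstacle, is the sign bookkeeping. The colors on the shuffle side --- $z_1z_3$, $z_1z_2/z_3$, $z_3/(z_1z_2)$, and the free sum's $(z_3,z_1/z_3,z_2)$ --- arise from the cumulative-product convention of the iterated integrals in~\eqref{equ:2Riem1RiemProddep3}, and must be translated into the bar-notation of the series~\eqref{equ:defnMPL} with care. A naive translation already reproduces the all-positive formula but has to be checked pattern by pattern, since the conversion is invisible when every sign is $+1$. I therefore expect the bulk of the work to be verifying, for each of the eight choices of $(z_1,z_2,z_3)$, which of the four collapsed triples cancel in pairs and which boundary values survive, rather than any single conceptual step.
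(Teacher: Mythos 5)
Your proposal is correct and follows exactly the paper's own route: the paper proves Theorem~\ref{thm:sumzab1} by setting $x=1$, $y=z=0$ in \eqref{equ:FExyzDtimesSingle} for the eight sign patterns and invoking Lemma~\ref{lem:relSharp} to identify $\z_\sha(1,w-2,1;\cdot)$ with $\z_*(1,w-2,1;\cdot)$, precisely as you describe. Your termwise analysis of which monomials survive at $(1,0,0)$, and your identification of the sign bookkeeping as the only real labor, match the intended argument.
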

\begin{proof}
Taking $x=1,y=z=0$ in \eqref{equ:FExyzDtimesSingle} we obtain the formulas in the theorem immediately using Lemma~\ref{lem:relSharp}.
\end{proof}

\begin{cor}\label{cor:MMVab1SumFormula}
Let $w\ge 4$, $u=w-2$ and $v=w-1$.  Then we have
\begin{align*}
\sum{}' T(a,b,1)=&\, 2T(u,2)+4\Big(\z(u,\bar1,\bar1)+\z(\bar{u},1,1)-\z(\bar{u},1,\bar1)-\z(u,\bar1,1)\Big).
\end{align*}
\end{cor}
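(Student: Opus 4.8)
The plan is to run the same argument that yields the double $T$-value formula in Corollary~\ref{cor:MMVSumFormulaDepth2}, now feeding in the eight depth-3 evaluations of Theorem~\ref{thm:sumzab1}. First I would expand the three parity factors in the definition \eqref{equ:defnMTV} of $T(a,b,1)$ (coming from $(1-(-1)^{n_1})(1+(-1)^{n_2})(1-(-1)^{n_3})$, read off level by level) to obtain
\begin{align*}
T(a,b,1)=&\,\z(a,b,1)-\z(\bar a,b,1)+\z(a,\bar b,1)-\z(\bar a,\bar b,1)\\
&\,-\z(a,b,\bar1)+\z(\bar a,b,\bar1)-\z(a,\bar b,\bar1)+\z(\bar a,\bar b,\bar1).
\end{align*}
This is the exact depth-3 analogue of the eight-to-four term decomposition used in Corollary~\ref{cor:MMVSumFormulaDepth2}, and summing it over $a+b=w-1$, $a\ge2$, reduces $\sum{}'T(a,b,1)$ to a signed combination of the eight sums $\sum{}'\z(\pm a,\pm b,\pm1)$.

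Next I would substitute the eight identities of Theorem~\ref{thm:sumzab1} and collect by type. The two depth-2 families should behave as follows: the terms $\z(v,1)$ and $\z(\bar v,1)$ coming from the $b+c=w$ (double-times-single) part of \eqref{equ:FExyzDtimesSingle} cancel at once, while the four terms $\z(u,2),\z(\bar u,2),\z(u,\bar2),\z(\bar u,\bar2)$ should assemble, through the depth-2 parity expansion $T(u,2)=\z(u,2)-\z(u,\bar2)+\z(\bar u,2)-\z(\bar u,\bar2)$, into the announced leading term $2T(u,2)$. Simultaneously, most of the genuinely depth-3 contributions and the paired terms of the form $\z(\bar1,\pm u,1)$ should cancel, leaving the four-term combination $4\big(\z(u,\bar1,\bar1)+\z(\bar u,1,1)-\z(\bar u,1,\bar1)-\z(u,\bar1,1)\big)$.

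The delicate point, and the one I expect to be the main obstacle, is the final cancellation of the ``boundary'' contributions that are not allowed to appear in the answer: the remaining depth-2 terms $\z(v,\bar1),\z(\bar v,\bar1)$ together with every depth-3 term of leading index $1$, such as $\z(\bar1,u,\bar1)$ and $\z(\bar1,\bar u,\bar1)$. Unlike the depth-2 formula of Corollary~\ref{cor:MMVSumFormulaDepth2}, where the analogous $\z(\bar1,v),\z(\bar1,\bar v)$ survive, here they must vanish identically. The tool for this is the stuffle product of a depth-1 with a depth-2 Euler sum; for example
\begin{equation*}
\z(\bar1)\z(\bar u,\bar1)=\z(\bar1,\bar u,\bar1)+2\z(\bar u,\bar1,\bar1)+\z(v,\bar1)+\z(\bar u,2)
\end{equation*}
(and its three sign-variants) lets one trade each leading-1 sum for admissible sums of leading index $\ge2$ plus a product and a depth-2 term. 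I would use these relations to rewrite all leading-1 terms, and then check that the resulting products and the leftover $\z(v,\bar1),\z(\bar v,\bar1)$ cancel exactly, which is what turns the plausible collapse above into a proof.
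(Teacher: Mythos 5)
Your route is essentially the paper's: its proof of this corollary is exactly ``direct computation using Theorem~\ref{thm:sumzab1}'' together with the identities \eqref{equ:z11rel}, and your eight-term parity expansion of $T(a,b,1)$, with precisely those signs, is the correct starting point. However, your description of how the computation closes does not match what the raw substitution actually produces, and the step you defer is where all the work lies. Collecting the eight identities of Theorem~\ref{thm:sumzab1} with your signs, the terms $\z(v,1)$ and $\z(\bar v,1)$ do cancel, but the $(u,2)$-family comes out as $\z(u,2)+3\z(\bar u,2)-2\z(u,\bar2)-2\z(\bar u,\bar2)$ rather than $2T(u,2)$; the term $\z(\bar u,1,\bar1)$ appears with coefficient $-5$ (not $-4$), $\z(u,\bar1,1)$ and $\z(u,\bar1,\bar1)$ with coefficients $-2$ and $+2$ (not $\mp4$), and $\z(\bar u,\bar1,\bar1)$, $\z(\bar1,\bar u,\bar1)$, $\z(\bar1,u,\bar1)$, $\z(v,\bar1)-\z(\bar v,\bar1)$ all survive with nonzero coefficients. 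So the auxiliary relations are needed not merely to kill the ``boundary'' terms you single out, but to repair the coefficients of the terms that are supposed to remain. You do correctly identify the right extra tool: stuffle products of a depth-1 with a depth-2 Euler sum (your sample identity $\z(\bar1)\z(\bar u,\bar1)=\z(\bar1,\bar u,\bar1)+2\z(\bar u,\bar1,\bar1)+\z(v,\bar1)+\z(\bar u,2)$ is correct), and this is exactly how the paper's two identities \eqref{equ:z11rel} enter, namely stuffled against $\z(u)$ and $\z(\bar u)$. But eliminating the leading-$\bar1$ terms by your relations still leaves a nontrivial combination (involving $\z(\bar u,1,\bar1)$, $\z(\bar u,\bar1,\bar1)$, $\z(u,\bar1,1)$ and the products $\z(\bar1)\z(\pm u,\bar1)$) whose vanishing must be established; the sentence ``then check that \dots cancel exactly'' is therefore not a finishing touch but the actual proof, and as written the proposal is a correct plan rather than a completed argument.
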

\begin{proof}
These follow from direct computation using Theorem~\ref{thm:sumzab1} and the two identities:
\begin{equation}\label{equ:z11rel}
\z(\bar{1},\bar1)-\z(\bar{1},1)=\z(\bar2) \quad \text{and} \quad 2\z(\bar2)=-\z(2).
\end{equation}
See, for example, \cite[Prop.~14.2.5]{Zhao2016}.
\end{proof}

\begin{thm}\label{thm:sumz1bc}
Let $w\ge 4$, $u=w-2$ and $v=w-1$. Set $\sum{}=\sum{}_{b+c=w}$. Then we have
\begin{align*}
\sum{}\z_\sha(1,b,c)=&\,\z(2,u)+\z_*(1,v)+\z_*(1,1,u)-\frac12\z(2)\z(u),\\
\sum{}\z_\sha(1,\bar{b},c)=&\, \z(\bar2,\bar{u})
+\z_*(1,v)+\z_*(1,\bar1,\bar{u})+\z_*(1,\bar{u},\bar1)+\z(\bar1,1,\bar{u})-\z(\bar1,\bar1,\bar{u})-\z_*(1,\bar{u},1),\\
\sum{}\z(\bar1,b,c)=&\, \z(2,u)+\z(\bar1,\bar{v})
+\z(\bar1,u,\bar1)+2\z(\bar1,\bar1,u)-\z(\bar1,1,u)-\z(\bar1,u,1),\\
\sum{}\z(\bar1,\bar{b},c)=&\, \z(\bar2,\bar{u})+\z(\bar1,\bar{v})+\z(\bar1,1,\bar{u}),\\
\sum{}\z(\bar1,\bar{b},\bar{c})=&\, \z(\bar2,u)
+\z(\bar1,v)+\z(\bar1,u,1) +\z(\bar1,1,u)-\z(\bar1,u,\bar1),\\
\sum{}\z_\sha(1,b,\bar{c})=&\, \z(2,\bar{u})
+\z_*(1,\bar{v})+\z_*(1,\bar{u},1)+\z_*(1,1,\bar{u})
-\z_\sha(1,\bar{u},\bar1)-\frac12\z(2)\z(\bar{u}),\\
\sum{}\z_\sha(1,\bar{b},\bar{c})=&\, \z(\bar2,u)
+\z_*(1,\bar{v})+\z_*(1,\bar1,u)+\z(\bar1,1,u)-\z(\bar1,\bar1,u),\\
\sum{}\z(\bar1,b,\bar{c})=&\, \z(2,\bar{u})
+\z(\bar1,v)+2\z(\bar1,\bar1,\bar{u})-\z(\bar1,1,\bar{u}).
\end{align*}
\end{thm}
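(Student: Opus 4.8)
The plan is to re-use the machine behind Theorem~\ref{thm:sumzab1}, but with the complementary specialization that isolates the \emph{first} argument rather than the last. For each of the eight sign patterns $(z_1,z_2,z_3)\in\{\pm1\}^3$ I would take the master functional equation \eqref{equ:FExyzDtimesSingle} at the appropriate weight and substitute $x=z=0$, $y=1$. The only delicate point in this substitution is the factor $(x^{\tlb}-z^{\tlb})/(x-z)$, which must be read as the polynomial $\sum_{i=0}^{\tlb-1}x^iz^{\tlb-1-i}$; its value at $x=z=0$ is $1$ if $b=2$ and $0$ otherwise. With this convention every coefficient degenerates to $0^{\tla}=[a=1]$, $0^{\tlc}=[c=1]$, or $1$, so the evaluation is unambiguous.

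Under $x=z=0$, $y=1$ the terms collapse as follows. On the right-hand side of \eqref{equ:FExyzDtimesSingle} the \emph{first} shuffle term carries $(x+z)^{\tla}(y+z)^{\tlb}y^{\tlc}$, which becomes $[a=1]$, and hence produces precisely the target sum $\sum_{b+c}\z_\sha(1,b,c;z_1,z_3/z_1,z_1z_2/z_3)$; the other two shuffle terms carry an extra $z^{\tlc}$ or $x^{\tlb}$ and therefore collapse to the single values $\z_\sha(1,\cdot,1)$ and $\z_\sha(1,1,\cdot)$. On the left-hand side the prefactor $\tfrac{\delta}{2}\z(2)\z_*(\,\cdot\,)z^{w-3}$ vanishes since $z=0$ and $w\ge4$; the two depth-$2$ stuffle terms survive only at $b=2$ and at $b=1$, giving values of the shapes $\z(2,\cdot)$ and $\z_*(1,\cdot)$ (convergent, hence unregularized, when the leading entry is barred); and the three depth-$3$ stuffle terms collapse to the single values $\z_*(1,\cdot,1)$ and $\z_*(1,1,\cdot)$ (the latter twice). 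Solving the resulting linear relation for the target sum expresses it through these few explicit values.

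The final step is to reduce the $\sha$-regularized single values using Lemma~\ref{lem:relSharp}. For $\z_\sha(1,\cdot,1)$ the leading pair is $(1,\cdot)\ne(1,1)$, so it equals its $*$-counterpart and cancels the single $\z_*(1,\cdot,1)$ produced on the left; for $\z_\sha(1,1,\cdot)$ the lemma gives $\z_*(1,1,\cdot)+\tfrac12\z(2)\z(\cdot)$, so that against the two copies of $\z_*(1,1,\cdot)$ on the left exactly one copy of $\z_*(1,1,\cdot)$ survives together with the correction $-\tfrac12\z(2)\z(\cdot)$ --- precisely the terms $-\tfrac12\z(2)\z(u)$ and $-\tfrac12\z(2)\z(\bar u)$ in the first and sixth identities. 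Matching the sign vector $(z_1,z_3/z_1,z_1z_2/z_3)$ of the surviving sum against each prescribed argument pattern then shows that the eight choices of $(z_1,z_2,z_3)$ yield the eight stated identities in turn; when the first argument is barred the value $\z(\bar1,\dots)$ is already convergent and no regularization intervenes, which is why those four identities are written with $\z$ rather than $\z_\sha$.

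I expect the main difficulty to be organizational rather than conceptual: keeping the three regularizations $\z$, $\z_*$, $\z_\sha$ straight across all eight sign patterns, checking case by case which single values cancel between the two sides and which survive (for instance the residual $\z_*(1,\bar u,1)$ and $-\z_\sha(1,\bar u,\bar1)$ in the sixth identity), and attaching the $\tfrac12\z(2)\z(\cdot)$ corrections from Lemma~\ref{lem:relSharp} to exactly the right cases. Once the specialization $x=z=0$, $y=1$ is in hand, each identity reduces to a finite and essentially mechanical collection of terms.
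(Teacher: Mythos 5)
Your proposal is correct and follows exactly the paper's own route: the paper proves this theorem by specializing \eqref{equ:FExyzDtimesSingle} at $y=1$, $x=z=0$ over the eight sign patterns and invoking Lemma~\ref{lem:relSharp}, which is precisely your argument, including the correct identification of where the $-\tfrac12\z(2)\z(u)$ and $-\tfrac12\z(2)\z(\bar u)$ corrections arise. No gap to report.
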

\begin{proof}
Taking $y=1,x=z=0$ in \eqref{equ:FExyzDtimesSingle} we obtain the formulas in the theorem immediately by Lemma~\ref{lem:relSharp}.
\end{proof}

Utilizing the same ideas as above, we may now derive the sum formulas for all triple Euler sums
and therefore the sum formula for triple $T$-values.
\begin{thm}\label{thm:sumzabc}
Let $w\ge 4$ and $u=w-2$. Then we have
\begin{align*}
\sum{}' T(a,b,c)=&\, \frac23 T(2)T(u)-2T(u,2)+4\Big(\z(u,\bar1,1)-\z(\baru,1,1)+\z(\baru,1,\bar1)-\z(u,\bar1,\bar1)\Big).
\end{align*}
\end{thm}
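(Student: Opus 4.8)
The plan is to reduce everything, via the parity decomposition of the triple $T$-value, to the eight \emph{full} sums
\[
S_{\bfz}:=\sum{}'\z(a,b,c;\bfz)=\sum_{\substack{a+b+c=w\\ a\ge 2}}\z(a,b,c;\bfz),\qquad \bfz\in\{\pm1\}^3,
\]
and to extract each $S_{\bfz}$ from the master functional equation \eqref{equ:FExyzDtimesSingle}. Expanding the parity factors $(1-(-1)^{n_1})(1+(-1)^{n_2})(1-(-1)^{n_3})$ exactly as in the depth $\le 2$ cases gives
\[
T(a,b,c)=\z(a,b,c)+\z(a,\bar b,c)-\z(\bar a,b,c)-\z(\bar a,\bar b,c)-\z(a,b,\bar c)-\z(a,\bar b,\bar c)+\z(\bar a,b,\bar c)+\z(\bar a,\bar b,\bar c),
\]
so that $\sum{}'T(a,b,c)$ is the corresponding signed combination of the $S_{\bfz}$. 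Hence the whole statement comes down to evaluating these eight full sums.

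The key point is that the three ``unit'' specializations of \eqref{equ:FExyzDtimesSingle} play genuinely different roles. Taking $x=1,y=z=0$ and $y=1,x=z=0$ already produced the last-index-$1$ sums of Theorem~\ref{thm:sumzab1} and the first-index-$1$ sums of Theorem~\ref{thm:sumz1bc}; the remaining choice $x=y=0,\ z=1$ is the one I would use here. Since \eqref{equ:FExyzDtimesSingle} is homogeneous of degree $w-3$ in $(x,y,z)$ it is a polynomial identity, so this substitution is legitimate, and in it the prefactor $(x+z)^{\tla}(y+z)^{\tlb}z^{\tlc}$ of the second term on the right collapses to $1$ for every $(a,b,c)$, yielding the full sum $\sum_{a+b+c=w}\z_\sha(a,b,c;z_1,z_2,z_1z_2z_3)$; meanwhile the first right-hand term collapses to a last-index-$1$ sum, the third to the single value $\z_\sha(u,1,1;z_3,z_1z_3,z_2)$, and the left-hand side to single (regularized) double and triple values together with the product term $\tfrac{\delta}{2}\z(2)\z_*(u;z_3)$. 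As $(z_1,z_2,z_3)$ runs over $\{\pm1\}^3$ the pattern $(z_1,z_2,z_1z_2z_3)$ runs bijectively over all eight sign patterns, so this single specialization isolates each full $\sha$-regularized sum individually.

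From there the steps are routine in outline: first solve each relation for the full $\sha$-regularized sum over $a\ge 1$; then pass to the genuine sum $S_{\bfz}$ over $a\ge 2$ by subtracting the $a=1$ contribution $\sum_{b+c=w-1}\z_\sha(1,b,c;\bfz)$, which is precisely a first-index-$1$ sum furnished by Theorem~\ref{thm:sumz1bc}, the divergent $(1,1,c)$ part being rewritten through Lemma~\ref{lem:relSharp}; next form the signed combination dictated by the decomposition above; and finally simplify the resulting collection of Euler sums, recognizing $T(u,2)$ from its own depth-$2$ parity decomposition and using the identities \eqref{equ:z11rel} together with the depth-$2$ sum formulas of Theorem~\ref{thm:sumzab}. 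A convenient internal check is that the four $\z(\cdot,\pm1,\pm1)$ terms in the target are the exact negatives of those in Corollary~\ref{cor:MMVab1SumFormula}, so the claim is equivalent to the clean identity $\sum{}'T(a,b,c)+\sum{}'T(a,b,1)=\tfrac23 T(2)T(u)$.

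I expect the main obstacle to be the regularization bookkeeping, which is concentrated in the appearance of the product term $\tfrac23 T(2)T(u)=\z(2)T(u)$. This term has to be assembled, with the correct signs, from two separate sources: the explicit $\tfrac{\delta}{2}\z(2)\z_*(u;z_3)$ contributions on the left of \eqref{equ:FExyzDtimesSingle}, which survive only for the patterns with $z_1=z_2=1$ and hence feed only $S_{+++}$ and $S_{++-}$, and the $\tfrac12\z(2)\z(c)$ corrections generated by Lemma~\ref{lem:relSharp} when the $(1,1,c)$ terms are stripped off in the second step (already folded into Theorem~\ref{thm:sumz1bc}). Keeping these straight across the eight sign patterns, and verifying that the large number of single and boundary Euler sums cancel down to exactly $-2T(u,2)$ plus the four depth-$3$ terms, is the delicate part; the remainder is lengthy but mechanical coefficient collection.
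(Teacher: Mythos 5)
Your proposal is correct and follows essentially the same route as the paper: the paper likewise specializes \eqref{equ:FExyzDtimesSingle} at $x=y=0$, $z=1$ to obtain the eight restricted sums $\sum{}'\z(a,b,c;\bfz)$ (invoking Lemma~\ref{lem:relSharp} together with Theorems~\ref{thm:sumzab1} and~\ref{thm:sumz1bc} to handle the boundary and $a=1$ terms), and then combines them via the same parity decomposition of $T(a,b,c)$ into the eight Euler sums. Your observation that the claim is equivalent to Corollary~\ref{cor:KanecoTsumuraThm3.3} via Corollary~\ref{cor:MMVab1SumFormula} is also consistent with how the paper organizes these results.
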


\begin{proof}
Taking $x=y=0,z=1$ in \eqref{equ:FExyzDtimesSingle} and using Lemma~\ref{lem:relSharp}
we can find the sum formulas for all triple Euler sums as follows:
\begin{align*}
\sum{}'\z(a,b,c)=&\,\z(w),\\
\sum{}'\z(a,\bar{b},c)=&\,2\z(\bar{u},1,\bar1)-2\z(\bar{u},1,1)-\z(\bar1,1,\bar{u})+\z(\bar1,\bar1,\bar{u})\\
 \ & +\z(\bar{v},\bar1)-\z(\bar{v},1)-\z(\bar{u},2)-\z(\bar2,\bar{u}),\\
\sum{}'\z(\bar{a},b,c)=&\,\z(\bar{u},1,1)-\z(\bar{u},1,\bar1)+\z(\bar1,u,1)-\z(\bar1,u,\bar1)\\
 \ & +\z(\bar1,1,\bar{u})+\z(\bar1,1,u)-2\z(\bar1,\bar1,u)-\z(\bar{u},\bar2)-\z(2,u)\\
\sum{}'\z(\bar{a},\bar{b},c)=&\,\z(\bar1,1,u)-\z(\bar1,1,\bar{u})+\z(\bar1,v)-\z(\bar1,\bar{v})
+\z(\bar2,u)-\z(\bar2,\bar{u})+\z(\bar{w}),\\
\sum{}' \z(\bar{a},b,\bar{c})=&\,\z(u,\bar1,1)-\z(u,\bar1,\bar1)+\z(\bar1,1,\bar{u})-2\z(\bar1,\bar1,\bar{u})+\z(\bar1,\bar1,u) \\
 \ &+ \z(\bar1,\bar{v})-\z(\bar1,v)-\z(u,2)-\z(2,\bar{u})-\z(u)\z(\bar2),\\
\sum{}'\z(a,\bar{b},\bar{c})=&\,\z(u,\bar1,\bar1)-\z(u,\bar1,1)+\z(\bar1,\bar1,u)-\z(\bar1,1,u)-\z(u,\bar2)-\z(\bar2,u),\\
\sum{}'\z(a,b,\bar{c})=&\,\z(\bar{u},1,1)-\z(\bar{u},1,\bar1)+\z(\bar{v},1)-\z(\bar{v},\bar1)-\z(\bar{u},\bar2)+\z(\bar{u},2)+\z(\bar{w}),\\
\sum{}' \z(\bar{a},\bar{b},\bar{c})=&\,\z(\bar1,u,\bar1)-\z(\bar1,u,1)+\z(\bar1,\bar1,\bar{u})-\z(\bar1,1,u)-\z(\bar{u},2)-\z(\bar2,u).
\end{align*}
In the process we need Theorems~\ref{thm:sumzab1} and~\ref{thm:sumz1bc}. The theorem follows immediately from the decomposition
of the triple $T$-values in terms of the triple Euler sums.
\end{proof}

 From the above we can derive \cite[Thm. 3.3]{KanekoTs2019} as a corollary.
\begin{cor}\label{cor:KanecoTsumuraThm3.3}
For all $w\ge 4$ we have
\begin{equation*}
    \sum_{a+b+c=w,a\ge 2} T(a,b,c)+ \sum_{a+b=w,a\ge 2} T(a,b,1)=\frac23T(2) T(w-2).
\end{equation*}
\end{cor}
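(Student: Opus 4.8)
The plan is to recognize that the two sums appearing on the left-hand side have already been evaluated in closed form earlier in the paper, so that the corollary reduces to a single cancellation rather than any new computation. Indeed, the first sum $\sum_{a+b+c=w,\,a\ge 2} T(a,b,c)$ is precisely the quantity $\sum{}' T(a,b,c)$ computed in Theorem~\ref{thm:sumzabc}, and the second sum $\sum_{a+b=w,\,a\ge 2} T(a,b,1)$ is precisely the quantity $\sum{}' T(a,b,1)$ computed in Corollary~\ref{cor:MMVab1SumFormula}, in both cases with $u=w-2$. So the first step is simply to write down these two closed forms side by side and add them.

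Carrying out the addition, I expect the $-2T(u,2)$ appearing in Theorem~\ref{thm:sumzabc} to cancel against the $+2T(u,2)$ appearing in Corollary~\ref{cor:MMVab1SumFormula}, so that no double $T$-value survives. The remaining work is to check that the two groups of four Euler sums also cancel: the terms $4\big(\z(u,\bar1,1)-\z(\baru,1,1)+\z(\baru,1,\bar1)-\z(u,\bar1,\bar1)\big)$ coming from Theorem~\ref{thm:sumzabc} should be, slot by slot, the exact negatives of the terms $4\big(\z(u,\bar1,\bar1)+\z(\bar u,1,1)-\z(\bar u,1,\bar1)-\z(u,\bar1,1)\big)$ coming from Corollary~\ref{cor:MMVab1SumFormula}, using $\baru=\bar u$. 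If so, all eight Euler sums cancel in pairs, and the only surviving term is $\tfrac23 T(2)T(u)=\tfrac23 T(2)T(w-2)$, which is exactly the claimed right-hand side.

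There is essentially no analytic obstacle here, since all the substantive work was already done in deriving Theorem~\ref{thm:sumzabc} and Corollary~\ref{cor:MMVab1SumFormula}; the only point requiring care is the sign bookkeeping. Specifically, I would verify that the alternating-bar labels line up correctly — that each of the four Euler sums in the triple formula really pairs with its exact negation in the depth-two formula (same slots, opposite overall sign) rather than merely resembling it — since a single transposed argument or a mismatched bar would leave a nonzero remainder and spoil the clean answer. Once this pairwise matching is confirmed, the corollary follows at once from the two cited results.
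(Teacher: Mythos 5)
Your proposal is correct, and the cancellation you predict does occur exactly: adding the right-hand sides of Theorem~\ref{thm:sumzabc} and Corollary~\ref{cor:MMVab1SumFormula}, the $\mp 2T(u,2)$ terms cancel and the two quadruples of Euler sums are slot-by-slot negatives of each other ($+4\z(u,\bar1,1)$ against $-4\z(u,\bar1,1)$, $-4\z(\bar u,1,1)$ against $+4\z(\bar u,1,1)$, $+4\z(\bar u,1,\bar1)$ against $-4\z(\bar u,1,\bar1)$, $-4\z(u,\bar1,\bar1)$ against $+4\z(u,\bar1,\bar1)$), leaving precisely $\tfrac23 T(2)T(u)$. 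The paper's own one-line proof is packaged differently: it goes back to the eight sign-pattern equations underlying Theorem~\ref{thm:sumzabc} and strips out the non-admissible ($a=1$) terms via Theorem~\ref{thm:sumz1bc}, rather than quoting Corollary~\ref{cor:MMVab1SumFormula}; but both routes rest on the same generating-function specializations, and yours is the more transparent bookkeeping since it reduces the corollary to two already-displayed closed forms. One small point worth flagging: you tacitly identify the corollary's second sum $\sum_{a+b=w,\,a\ge2}T(a,b,1)$ with the $\sum{}'T(a,b,1)$ of Corollary~\ref{cor:MMVab1SumFormula}, whose index range is forced by weight homogeneity to be $a+b=w-1$ (so that $T(a,b,1)$ has weight $w$); this identification is the right one — the range $a+b=w$ in the corollary's statement is a slip, as it would make the identity weight-inhomogeneous — but it deserves an explicit sentence rather than silent use.
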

\begin{proof}
Let $u=w-2$. We only need to remove the non-admissible terms from those eight equations in
the proof of Theorem~\ref{thm:sumzabc} by Theorem~\ref{thm:sumz1bc}.
\end{proof}

The following restricted sum formulas for triple Euler sums will be needed in the future.
\begin{thm}\label{thm:sumza1c}
Let $w\ge 4$, $u=w-2$ and $v=w-1$. Set $\sum{}=\sum{}_{a+c=w-1}$. Then we have
\begin{align*}
\sum{}\z_*(a,1,c)&\,=\z_*(1,1,u)+\z(v,1)+\z(2,u),\\
\sum{}\z_*(a,\bar{1},\bar{c})&\,=\z_*(1,\bar1,\bar{u})+\z(\bar1,\bar{u},\bar1)-\z(\bar1,\bar{u},1)+\z(v,\bar1)+\z(\bar2,\bar{u}),\\
\sum{}\z(\bar{a},\bar{1},c)&\,=\z(\bar1,\bar1,\bar{u})+\z(2,\bar{u}) +\z(\bar{v},\bar1),\\
\sum{}\z(\bar{a},1,\bar{c})&\,=\z(\bar1,u,1) +\z(\bar1,1,u)-\z(\bar1,u,\bar1)+\z(\bar{v},1) +\z(\bar2,u), \\
\sum{}\z(\bar{a},1,c)&\,=\z(\bar1,\bar1,\bar{u})+2\z(\bar{v},1)+\z(\bar1,v)-\z(\bar1,\bar{v})-\z(\bar{v},\bar1)-\z(\bar{u},2)-\z(w),\\
\sum{}\z_*(a,1,\bar{c})&\,=\z(\bar{u},\bar1,\bar1)-\z(\bar{u},\bar1,1)+\z_*(1,1,\bar{u})-\z(\bar{v},\bar1)\\
&\, +\z(v,\bar1)+2\z(\bar{v},\bar1)-\z(\bar{v},1)-\z(\bar{u},\bar2)+\z(2)\z(\bar{u})-\z(\bar{w}),\\
\sum{}\z_*(a,\bar{1},c)&\,=2\z(u,\bar1,\bar1)-2\z(u,\bar1,1)+\z(\bar1,1,u)+\z_*(1,\bar1,u)-\z(\bar1,\bar1,u)+\z(\bar{v},\bar1)\\
&\,     +\z(v,\bar1)+\z(\bar1,v)-\z(\bar{v},1)-\z(\bar1,\bar{v})-\z(2,u)-2\z(u,\bar2)-\z(\bar{w})- \z(w),\\
\sum{}\z(\bar{a},\bar{1},\bar{c})&\,=3\z(u,\bar1,1)-3\z(u,\bar1,\bar1)+ \z(\bar1,\bar1,u)
  +2\z(\bar{v},1)-\z(\bar{v},\bar1)+\z(u,\bar2)+\z(2,u)-\z(u,2).
\end{align*}
\end{thm}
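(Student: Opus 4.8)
\emph{Proof strategy.}
The plan is to extract the middle–slot formulas from the same master equation \eqref{equ:FExyzDtimesSingle}, but a new phenomenon forces a different specialization. The three coordinate specializations $x=1,y=z=0$, $y=1,x=z=0$ and $x=y=0,z=1$ used above produce, respectively, the third–slot–$1$, first–slot–$1$ and full triple sums (Theorems~\ref{thm:sumzab1}, \ref{thm:sumz1bc} and~\ref{thm:sumzabc}); none of them reaches a sum with the \emph{middle} argument fixed. The reason is that the middle index $b$ is attached to $y$ through the exponent $\tlb=b-1$, while on the shuffle side the summation variable is twisted. I would therefore specialize $x=0$, $y=-1$, $z=1$ in \eqref{equ:FExyzDtimesSingle}, run over all eight sign patterns $(z_1,z_2,z_3)$, and read off the eight identities; here the middle argument is governed by $z_2$ and the outer ones by $z_1$ and $z_1z_2z_3$ (the induced sign $z_3/(z_1z_2)$ in the relevant term).

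The virtue of $x=0$, $y+z=0$ is that it kills every full triple sum: in each shuffle term either the factor $x^{\tlb}$ (from $x=0$) or the factor $(y+z)^{\tlb}$ (from $y+z=0$) forces $b=1$, so only restricted sums survive. Concretely, the single shuffle term carrying $(y+z)^{\tlb}z^{\tlc}$ becomes the desired \emph{unsigned} middle–slot sum $\sum_{a+c=w-1}\z_\sha(a,1,c)$, which equals $\sum\z_*(a,1,c)$ up to the single $a=1$ correction supplied by Lemma~\ref{lem:relSharp}. Everything else is lower data: the first line of \eqref{equ:FExyzDtimesSingle} contributes a term proportional to $\z(2)\z(u)$ (present when $\delta=1$) together with a signed depth–$2$ double sum, both evaluated by Theorem~\ref{thm:sumzab}, while the remaining depth–$3$ pieces are restricted sums with one index fixed to $1$.

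The obstacle is that these remaining depth–$3$ terms are not all of the unsigned first–slot type covered by Theorem~\ref{thm:sumz1bc}: the specialization also produces \emph{signed} restricted sums, namely $\sum(-1)^{c-1}\z(a,1,c)$ (signed middle–slot) and $\sum(-1)^{b-1}\z(1,b,c)$, $\sum(-1)^{c-1}\z(1,b,c)$ (signed first–slot). These are genuinely new quantities of the same weight, so a single specialization leaves an underdetermined system. I expect this entanglement, rather than any individual computation, to be the hard part.

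To close the system I would add companion specializations that annihilate the full triple sum by forcing $x+z=0$ rather than by isolating a single slot. For instance $x=1$, $z=-1$, $y=0$ makes $(x+z)^{\tla}$ vanish except at $a=1$, so the triple sum drops out and one is left with a relation among the signed middle–slot sums $\sum(-1)^{c-1}\z(a,1,c)$, the signed third–slot sums $\sum(-1)^{a-1}\z(a,b,1)$, and known single values; analogous specializations with a sign inserted in the $y$–slot produce the signed first–slot relations. These assemble into a finite linear system in the unknown signed restricted sums; solving it over the eight sign patterns, converting $\z_\sha\leftrightarrow\z_*$ by Lemma~\ref{lem:relSharp}, and simplifying with the level–$2$ reflections \eqref{equ:z11rel} reduces each equation to the stated right–hand side.
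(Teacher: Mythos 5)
There is a genuine gap, and it stems from a false premise. You assume that a usable specialization must annihilate the full triple sums, and you engineer this with $x=0$, $y+z=0$. But those full sums are not obstacles: they were already computed explicitly (for all eight sign patterns) in the proof of Theorem~\ref{thm:sumzabc}, and the paper's proof simply takes $x=y=1$, $z=0$ in \eqref{equ:FExyzDtimesSingle}. There the stuffle-side term carrying $x^{\tla}z^{\tlb}y^{\tlc}$ forces $b=1$ and produces exactly the unsigned middle-slot sum $\sum_{a+c=w-1}\z_*(a,1,c;\cdot)$, while every other term is either a depth-$2$ sum (Theorem~\ref{thm:sumzab}), a third-slot-$1$ or first-slot-$1$ sum (Theorems~\ref{thm:sumzab1}, \ref{thm:sumz1bc}), or a full triple sum already known from Theorem~\ref{thm:sumzabc}; one then solves for the middle-slot sum, using Lemma~\ref{lem:relSharp} for the regularized corrections. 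No new unknowns ever enter.

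Your route, by contrast, does not close. The specialization $x=0$, $y=-1$, $z=1$ entangles the target with sums weighted by $(-1)^{c-1}$ or $(-1)^{b-1}$ in the exponents; these are not Euler sums with flipped signs $z_j\mapsto -z_j$ but genuinely new linear combinations, computed nowhere in the paper. Your proposed companion $x=1$, $z=-1$, $y=0$ then introduces the further unknowns $\sum(-1)^{a-1}\z(a,b,1;\cdot)$ and $\sum(-1)^{b-1}\z(a,b,1;\cdot)$, and any specialization designed to reach those (e.g.\ with a $-1$ in the $x$-slot) revives a signed \emph{full} triple sum, since $(x+z)^{\tla}(y+z)^{\tlb}y^{\tlc}$ no longer vanishes. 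You acknowledge the entanglement as ``the hard part'' but give no argument that the resulting linear system is solvable, and no count of equations versus unknowns; as written the proof is incomplete at precisely the step that matters.
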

\begin{proof}
Taking $x=y=1,z=0$ in \eqref{equ:FExyzDtimesSingle} we obtain the formulas in the theorem using Lemma~\ref{lem:relSharp},
Theorems~\ref{thm:sumzab}, \ref{thm:sumzab1}, \ref{thm:sumz1bc}, and the sum formulas in the proof of Theorem~\ref{thm:sumzabc}.
\end{proof}

The following result will play a key role in the proof of Theorem \ref{thm:main}.
\begin{thm}\label{thm:sumzabcWt2bPOWER}
Let $w\ge 4$, $u=w-2$ and $v=w-1$. Then we have
\begin{align*}
& \sum{} 2\cdot 2^\tlb  \z_\sha(a,b,c)=2\z_*(1,1,u)+w\z_*(1,v)-2\z(u,2)-\z(w),\\
& \sum{} 2\cdot 2^\tlb \z_\sha(a,\bar b,c)=4\z(u,\bar1,\bar1)-4\z(u,\bar1,1)-2\z(\bar1,\bar1,u)+2\z(\bar1,1,u)+2\z_*(1,\bar1,u)\\
&\ \hskip1.5cm	    +2\z(\bar{v},\bar1)-2\z(\bar{v},1)+u\z_*(1,v)+2\z_*(1,\bar{v})+4\z(\bar2,u)+2\z(u,2)+\z(w)+2\z(\bar{w}),\\
& \sum{} 2\cdot 2^\tlb \z(\bar a,b,c)
=   2\z(\bar1,\bar1,\bar{u}) +w\z(\bar1,\bar{v})+2\z(\bar{v},\bar1)-2\z(\bar{v},1)\\
&\ \hskip1.5cm	 +\z(\bar2,\bar{u})+\z(2,\bar{u})+2\z(w)-\z(\bar{w}) +\z(2)\z(\bar{u}),\\
& \sum{} 2\cdot 2^\tlb \z(\bar a,\bar b,c)=2\z(\bar1,\bar1,\bar{u})+u\z(\bar1,\bar{v})+2\z(\bar1,v)+\z(\bar2,\bar{u})+\z(2,\bar{u})+\z(\bar{w}),\\
& \sum{} 2^\tlb \Big(\z_\sha(\bar a,\bar b,\bar c)
+\z_\sha(\bar a,b,\bar c) \Big)
    =2\z(u,\bar1,1)+2\z(\bar1,\bar1,u)-2\z(u,\bar1,\bar1)\\
&\ \hskip1.5cm	+v\z(\bar1,v)+\z(\bar{v},1)+\z(\bar1,\bar{v})-\z(\bar{v},\bar1)+\z(2,u)-\z(u,2),\\
& \sum{} 2^\tlb \Big(\z_\sha(a,b,\bar c)
+\z_\sha(a,\bar b,\bar c) \Big)=\z_*(1,\bar1,\bar{u})+\z(\bar1,1,\bar{u})-\z(\bar1,\bar1,\bar{u})+\z_*(1,1,\bar{u})\\
&\ \hskip1.5cm	 -\z(\bar{v},\bar1)+\z(\bar{v},1)+\z_*(1,v)+v\z_*(1,\bar{v})+\z(\bar2,\bar{u})+\z(2,\bar{u})+\z(\bar{w})+3\z(\bar2)\z(\bar{u}).
\end{align*}
\end{thm}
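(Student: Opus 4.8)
The plan is to specialize the master functional equation \eqref{equ:FExyzDtimesSingle} at the single point $x=0$, $y=1$, $z=1$, and to run this specialization through all eight sign patterns $(z_1,z_2,z_3)\in\{\pm1\}^3$. This choice is dictated by the shape of the right-hand side of \eqref{equ:FExyzDtimesSingle}: the two leading $\z_\sha$-terms carry the factor $(x+z)^{\tla}(y+z)^{\tlb}$, which at our point becomes $1^{\tla}\cdot 2^{\tlb}=2^{\tlb}$, while the other variables collapse to $y^{\tlc}=z^{\tlc}=1$. Thus the outer indices $a$ and $c$ are summed with unit weight and the middle index $b$ receives the weight $2^{\tlb}$, which is exactly the weighting in the theorem. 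Since \eqref{equ:FExyzDtimesSingle} is, at fixed $w$, a polynomial identity in $(x,y,z)$ homogeneous of degree $w-3$, this point evaluation is legitimate and keeps everything in weight $w$.

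First I would analyze the right-hand side. The two leading terms carry the sign patterns $(z_1,z_3/z_1,z_1z_2/z_3)$ and $(z_1,z_2,z_3/(z_1z_2))$; these coincide precisely when $z_3=z_1z_2$. Hence the four \emph{diagonal} patterns (with $z_3=z_1z_2$) each merge the two terms into a single $2\cdot 2^{\tlb}\z_\sha$, producing the first four displayed formulas, whereas the four \emph{off-diagonal} patterns (with $z_3\ne z_1z_2$) yield two distinct sign patterns sharing the coefficient $2^{\tlb}$; grouping the off-diagonal patterns that produce the same unordered pair of sign patterns then gives the last two formulas, each of the shape $2^{\tlb}(\z_\sha+\z_\sha)$. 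The third term on the right carries the factor $x^{\tlb}=0^{\tlb}$, which survives only at $b=1$ and therefore contributes the restricted sum $\sum_{a+c=w-1}\z_\sha(a,1,c)$ in the appropriate signs; these are exactly the quantities evaluated in Theorem~\ref{thm:sumza1c}.

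Next I would reduce the left-hand side. Setting $x=0$ forces an index to $1$ in the triple $\z_*$-sums (via $0^{\tla}$ or $0^{\tlb}$), turning them into restricted sums $\sum_{b+c=w-1}\z_*(1,b,c)$ and $\sum_{a+c=w-1}\z_*(a,1,c)$, while the depth-lowering block contributes $\sum_{b+c=w,\,b\ge2}\z_*(b,c)$ from the factor $(x^{\tlb}-z^{\tlb})/(x-z)$ at $x=0,z=1$, together with a term carrying the combinatorial factor $c-1$ coming from the removable singularity $(y^{\tlc}-z^{\tlc})/(y-z)=c-1$ at $y=z=1$. Each restricted sum is already known: I would invoke Theorem~\ref{thm:sumzab} for the depth-two pieces, Theorems~\ref{thm:sumzab1} and~\ref{thm:sumz1bc} together with the eight component identities in the proof of Theorem~\ref{thm:sumzabc}, and Theorem~\ref{thm:sumza1c} for the $(a,1,c)$ sums. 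Throughout, Lemma~\ref{lem:relSharp} converts between $\z_\sha$ and $\z_*$; the only genuine correction is $\z_\sha(1,1,c)=\z_*(1,1,c)+\tfrac12\z(2)\z(c)$, and the $\delta$-term contributes $\tfrac12\z(2)\z(u)$ exactly for the patterns with $z_1=z_2=1$. A final cleanup uses the elementary stuffle relation $\z(2)\z(u)=\z(2,u)+\z(u,2)+\z(w)$ (and its signed analogues) to put each answer in the stated normal form; for the all-positive case, for instance, the collected left-hand side simplifies to $2\z_*(1,1,u)+w\z_*(1,v)-2\z(u,2)-\z(w)$, matching the first formula.

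The main obstacle is not conceptual but organizational: correctly tracking the degenerate evaluations $0^{\tla}$, $0^{\tlb}$ and the limit $(y^{\tlc}-z^{\tlc})/(y-z)\to c-1$, matching each of the eight sign patterns to the correct pairing for the off-diagonal formulas, and carrying the regularization corrections of Lemma~\ref{lem:relSharp} consistently so that the $\z(2)\z(u)$-type terms cancel against the stuffle normalization. No single step is deep, but the sheer number of restricted-sum substitutions, each with its own sign bookkeeping, is where error is most likely to creep in.
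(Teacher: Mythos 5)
Your proposal is correct and follows exactly the paper's route: the paper's proof is precisely the specialization $x=0$, $y=z=1$ of \eqref{equ:FExyzDtimesSingle} over all sign patterns, reduced via Lemma~\ref{lem:relSharp} and Theorems~\ref{thm:sumzab}, \ref{thm:sumzab1}, \ref{thm:sumz1bc}, and \ref{thm:sumza1c}. Your account of why the diagonal patterns $z_3=z_1z_2$ merge into the $2\cdot 2^{\tlb}$ coefficient and the off-diagonal ones pair up, and of the degenerate evaluations on the left-hand side, supplies more detail than the paper itself gives.
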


\begin{proof}
Taking $x=0, y=z=1$ in \eqref{equ:FExyzDtimesSingle} we obtain the formulas in the theorem using Lemma~\ref{lem:relSharp},
Theorems~\ref{thm:sumzab}, \ref{thm:sumzab1}, \ref{thm:sumz1bc}, and \ref{thm:sumza1c}..
\end{proof}

\section{Depth 3 weighted sum formulas, Part B}
Similarly, by considering case (B) $\z_\sharp(s_1)\z_\sharp(s_2)\z_\sharp(s_3)$ ($\sharp=*$ or $\sha$, $s_j\in\db$) we may arrive at
the following, where $\sum_{}=\sum_{a+b+c=w,a,b,c\ge1}$ as before and $\sum{}'=\sum_{k+c=w,k\ge 2}$:
\begin{align}
 &  \sum{}\Big[(x+y+z)^{\tla}(z+y)^{\tlb}y^{\tlc}\z_\sha\Big({{\,a\, ,\ \,b\,\ ,\ \, c\,\  }\atop{z_1,z_3/z_1,z_2/z_3}}\Big)
  + (x+y+z)^{\tla}(z+y)^{\tlb}z^{\tlc}\z_\sha\Big({{\,a\, ,\ \,b\,\ ,\ \, c\,\  }\atop{z_1,z_2/z_1,z_3/z_2}}\Big) \notag \\
 &+ (x+y+z)^{\tla}(z+x)^{\tlb}x^{\tlc}\z_\sha\Big({{\,a\, ,\ \,b\,\ ,\ \, c\,\  }\atop{z_2,z_3/z_2,z_1/z_3}}\Big)
  + (x+y+z)^{\tla}(z+x)^{\tlb}z^{\tlc}\z_\sha\Big({{\,a\, ,\ \,b\,\ ,\ \, c\,\  }\atop{z_2,z_1/z_2,z_3/z_1}}\Big)\notag \\
 &+ (x+y+z)^{\tla}(x+y)^{\tlb}y^{\tlc}\z_\sha\Big({{\,a\, ,\ \,b\,\ ,\ \, c\,\  }\atop{z_3,z_1/z_3,z_2/z_1}}\Big)
  + (x+y+z)^{\tla}(x+y)^{\tlb}x^{\tlc}\z_\sha\Big({{\,a\, ,\ \,b\,\ ,\ \, c\,\  }\atop{z_3,z_2/z_3,z_1/z_2}}\Big) \Big] \notag \\
 & =\sum{}\Big[ x^{\tla}y^{\tlb} z^{\tlc}\z_*\Big({{\,a\, ,\,b\, ,\, c\, }\atop{z_1,z_2,z_3}}\Big)
  +x^{\tla}z^{\tlb} y^{\tlc}\z_*\Big({{\,a\, ,\,b\, ,\, c\, }\atop{z_1,z_3,z_2}}\Big)
  +y^{\tla}x^{\tlb} z^{\tlc}\z_*\Big({{\,a\, ,\,b\, ,\, c\, }\atop{z_2,z_1,z_3}}\Big)\notag \\
 &  +y^{\tla}z^{\tlb} x^{\tlc}\z_*\Big({{\,a\, ,\,b\, ,\, c\, }\atop{z_2,z_3,z_1}}\Big)
 +z^{\tla}x^{\tlb} y^{\tlc}\z_*\Big({{\,a\, ,\,b\, ,\, c\, }\atop{z_3,z_1,z_2}}\Big)
  +z^{\tla}y^{\tlb}x^{\tlc}\z_*\Big({{\,a\, ,\,b\, ,\, c\, }\atop{z_3,z_2,z_1}}\Big) \Big]\notag  \\
 &+ \sum{}'\Bigg[\Big(\z_*\Big({{\ \,k\,\ ,\, c\, }\atop{z_1z_2,z_3}}\Big)+\z_*\Big({{\,c\, ,\ \, k\,\ }\atop{z_3,z_1z_2}}\Big)\Big)\sum_{a+b=k} x^{\tla} y^{\tlb}  z^{\tlc}
 +  \Big(\z_*\Big({{\ \,k\,\ ,\, c\, }\atop{z_3z_1,z_2}}\Big)+\z_*\Big({{\,c\, ,\ \, k\,\  }\atop{z_2,z_3z_1}}\Big)\Big)\sum_{a+b=k} x^{\tla} z^{\tlb}  y^{\tlc} \notag \\
 &+ \Big(\z_*\Big({{\ \,k\,\ ,\, c\, }\atop{z_2z_3,z_1}}\Big)+\z_*\Big({{\,c\, ,\ \, k\,\ }\atop{z_1,z_2z_3}}\Big)\Big)\sum_{a+b=k} y^{\tla} z^{\tlb}  x^{\tlc}\Bigg]+ \z\Big({{\ \ w\ \ }\atop{z_1z_2z_3}}\Big)\sum  x^{\tla}y^{\tlb}z^{\tlc} \label{equ:tripleProd}
\end{align}
which leads to essentially four different cases using various combinations of $z_1,z_2,z_3=\pm1$.

\begin{thm}\label{thm:sumzabcWt3a2bPOWER}
Let $w\ge 4$, $u=w-2$ and $v=w-1$.  Then we have
\begin{align*}
&\ 2 \sum 3^\tla 2^\tlb \z_\sha(a,b,c)=\left(\frac13\binom{v}{2}+u\right)\z(w)+2\z_*(1,1,u)-2\z(u,2)+w\z_*(1,v),\\
&\ 2\sum 3^\tla 2^\tlb \Big( \z_\sha(a,\bar b, c)+\z(\bar a,b,\bar c)+\z(\bar a,\bar b,\bar c) \Big)
=2\z(\bar1, 1,u)+2\z_*(1,\bar1,u)+2\z(\bar1,\bar1,u)+\binom{v}{2}\z(w)\\
&\hskip1cm +2\z(\bar1,v)+2\z_*(1,\bar{v})+2\z(\bar1,\bar{v})-2\z(u,2) -4\z(u,\bar2)
+ u\Big( 2\z(\bar1,v)+2\z(\bar{w})+\z(w)+\z_*(1,v) \Big),\\
&\ 2\sum 3^\tla 2^\tlb \Big( \z_\sha(a,\bar b,\bar c)
+\z_\sha(a,b,\bar c)+\z(\bar a,\bar b,c) \Big) =
2\z_*(1,1,\bar{u})+2\z_*(1,\bar1,\bar{u})+2\z(\bar1,1,\bar{u})\\
&\hskip1cm +2\z_*(1,\bar{v})+2\z(\bar1,\bar{v})+2\z_*(1,v)
+\z(\bar2,\bar{u})-\z(2,\bar{u})-4\z(\bar{u},2)-2\z(\bar{u},\bar2)\\
&\hskip1cm +u\Big(2\z_*(1,\bar{v})+\z(\bar1, \bar{v})+\z(w)+2\z(\bar{w})\Big)+ \binom{v}{2}\z(\bar{w})+\z(w)-\z(\bar{w}),\\
&\ 2\sum 3^\tla 2^\tlb \z(\bar a,b,c)=2\z(\bar1,\bar1,\bar{u})+u\z(\bar1,\bar{v})+2\z(\bar1,v)\\
&\hskip1cm +\z(2,\bar{u})-2\z(\bar{u},\bar2)-\z(\bar2,\bar{u})+\left(\frac13\binom{v}{2}+1\right)\z(\bar{w})+(w-3)\z(w).
\end{align*}
\end{thm}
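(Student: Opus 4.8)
The plan is to specialize the depth-3 shuffle--stuffle master identity \eqref{equ:tripleProd} at $x=y=z=1$, which is dictated by the weights $3^\tla 2^\tlb$ in the statement: under this substitution $(x+y+z)^\tla=3^\tla$, each pairwise sum raised to the power $\tlb$ (namely $(z+y)^\tlb$, $(z+x)^\tlb$, $(x+y)^\tlb$) becomes $2^\tlb$, and every remaining single-variable factor $x^\tlc,y^\tlc,z^\tlc$ becomes $1$. Hence the left side of \eqref{equ:tripleProd} turns into a sum of six terms, each equal to $3^\tla 2^\tlb$ times a $\z_\sha$ of a triple Euler sum whose sign vector is one of the six ratio patterns $(z_1,z_3/z_1,z_2/z_3),\dots,(z_3,z_2/z_3,z_1/z_2)$.

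The first task is the sign combinatorics. For each $(z_1,z_2,z_3)\in\{\pm1\}^3$ I would compute how these six ratio vectors distribute among the eight possible sign patterns of a triple Euler sum. I expect to verify that the four choices $(+,+,+)$, $(+,-,-)$, $(+,+,-)$, $(-,-,-)$ realize all eight patterns exactly once: the two constant-sign choices send all six vectors onto a single pattern, producing an overall factor $6$ that is removed by dividing the resulting identity by $3$ (this is the source of the coefficients $\tfrac13\binom{v}{2}$ in the first and fourth formulas), whereas the two mixed choices split the six vectors into three patterns occurring twice each, which is precisely why the second and third formulas are three-term combinations carrying the coefficient $2$. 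The composition count $\sum_{a+b+c=w}1=\binom{w-1}{2}=\binom{v}{2}$ multiplying $\z(w;z_1z_2z_3)$ in \eqref{equ:tripleProd} supplies the $\binom{v}{2}\z(w)$ or $\binom{v}{2}\z(\bar w)$ term according to the sign of $z_1z_2z_3$.

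On the right I would reduce three blocks. The six permuted triple sums $\sum_{a+b+c=w}\z_*(\dots)$ all acquire coefficient $1$ at $x=y=z=1$, so they collapse to sums over the orderings of the sign vector; these are read off from the eight triple sum formulas established inside the proof of Theorem~\ref{thm:sumzabc}, after the $a=1$ boundary rows are restored through Theorem~\ref{thm:sumz1bc} and the $(1,1,c)$ rows are moved between $\z_\sha$ and $\z_*$ by Lemma~\ref{lem:relSharp}. The counting term is immediate. The genuinely new block, and the step I expect to be the main obstacle, is the middle sum of \eqref{equ:tripleProd}: at $x=y=z=1$ the inner factor $\sum_{a+b=k}x^\tla y^\tlb$ evaluates to $k-1$, so this block becomes a family of $(k-1)$-weighted double sums $\sum_{k+c=w,\,k\ge2}(k-1)\z_*(k,c;\pm,\pm)$. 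In Part~A these analogous terms collapsed to plain, unweighted sums (and a single corner value) because a variable was set to $0$ (compare Theorem~\ref{thm:sumzabcWt2bPOWER}, proved at $x=0$, $y=z=1$); here no variable vanishes, so the weighted double sums survive and must be evaluated.

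These weighted double sums are not rational multiples of $\z(w)$ --- already for ordinary MZVs one finds that $\sum_{k}(k-1)\z(k,w-k)$ carries irreducible depth-$2$ content such as $\z(u,2)$ --- so a tool beyond the tabulated plain and $2^{a-1}$-weighted formulas is required. I would obtain the needed $(a-1)$-weighted depth-$2$ identities for every sign pattern by a single first-derivative extraction (in $x$, evaluated at $x=y=1$) from the full generating-function identity of Proposition~\ref{prpp:FEdep2}; this expresses each weighted double sum through $\z(u,2)$, $\z_*(1,v)$, $\z(w)$ and their barred analogues, which is exactly where the terms $\z(u,2)$, $w\z_*(1,v)$, $\z(\bar2,\bar u)$, $\z(2,\bar u)$, etc.\ on the right-hand sides originate. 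With all three blocks in hand for each of the four sign choices, the proof finishes by assembling the pieces, normalizing any stray $\z(2)$, $\z(\bar2)$, $\z(\bar1,\bar1)$ factors via \eqref{equ:z11rel} and Lemma~\ref{lem:relSharp}, and collecting terms. The remaining difficulty is purely organizational: tracking the six sign-orderings across four choices, restoring the $a=1$ boundary correctly, and --- above all --- pinning down the weighted double sums, the one ingredient that genuinely goes beyond the earlier sum formulas.
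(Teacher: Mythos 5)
Your overall route coincides with the paper's: specialize \eqref{equ:tripleProd} at $x=y=z=1$, sort the six ratio sign-vectors for the four choices of $(z_1,z_2,z_3)$ (the two constant choices giving the factor $6$ that is divided down to $2$, the mixed ones splitting $6=2+2+2$), reduce the permuted triple sums via the formulas inside the proof of Theorem~\ref{thm:sumzabc} together with Theorem~\ref{thm:sumz1bc} and Lemma~\ref{lem:relSharp}, and read off $\binom{v}{2}\z(w;z_1z_2z_3)$ from the counting term. All of that is fine.

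The gap is in the step you yourself single out as the main obstacle, the middle block $\sum_{k+c=w,k\ge2}\tlk(\cdots)$. You treat every term there as a genuinely $\tlk$-weighted double sum to be computed from scratch, but the block comes pre-packaged in stuffle reversal pairs $\z_*(k,c;\alpha,\beta)+\z_*(c,k;\beta,\alpha)$, and the paper's key observation is that swapping the summation indices $k\leftrightarrow c$ in the second member turns $\sum\tlk\bigl[\z_*(k,c;\alpha,\beta)+\z_*(c,k;\beta,\alpha)\bigr]$ into $\sum(\tlk+\tlc)\z_*(k,c;\alpha,\beta)=(w-2)\sum\z_*(k,c;\alpha,\beta)$, i.e.\ $(w-2)$ times an \emph{unweighted} sum already supplied by Theorem~\ref{thm:sumzab}; this is where the coefficients $u$ and $w$ in the statement come from, and only a small residual such as $\sum{}'\tla\bigl(\z_*(\bar a,b)-\z(a,\bar b)\bigr)$ survives in the two mixed-sign cases. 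Your substitute for this --- differentiating Proposition~\ref{prpp:FEdep2} in $x$ and evaluating at $x=y=1$ --- does not close as stated: at $(1,1)$ the derivative of the left-hand side produces new unknowns of the form $\sum\tla\,2^{\tla-1}\z_\sha(a,b;\cdot)$ and $\sum 2^{\tla}\tlb\,\z(\cdot)$, so you get one identity relating two families of unknown weighted sums rather than a formula for $\sum\tla\z_*(a,b;\cdot)$. (The derivative idea is salvageable if you instead evaluate at $(x,y)=(1,0)$ or $(0,1)$, where the shuffle side collapses to boundary terms like $u\z(\bar v,\bar1)+\z(\bar u,\bar2)$ and the weighted sum is isolated on the stuffle side; some such extra input is in any case needed for the leftover residual terms, which the plain sum formulas alone do not determine.) As written, the evaluation of the weighted double sums --- the one ingredient beyond the earlier theorems --- is not actually carried out.
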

\begin{proof}
Taking $x=y=z=1$ in \eqref{equ:tripleProd} we get the following four equations:
\begin{align*}
 & 6 \sum 3^\tla 2^\tlb \z_\sha(a,b,c)
=\binom{v}{2}\z(w)+6\sum \z_*(a,b,c)
+3\sum{}\tlk\big[ \z_*(k,c)+\z_*(c,k) \big], \\
 & 2 \sum 3^\tla 2^\tlb \Big( \z_\sha(a,\bar b, c)+\z_\sha(\bar a,b,\bar c)
+\z_\sha(\bar a,\bar b,\bar c) \Big) =
 2 \sum \Big( \z(\bar a,b,\bar c)
+\z(\bar a,\bar b,c)
+\z(\bar a,b,\bar c) \Big) \\
 &\ \quad+\binom{v}{2}\z(w)+\sum{}\tlk\Big( \big[ 2\z(\bar k,\bar c)+2\z(\bar c,\bar k)\big]
+\big[ \z_*(k,c)+\z_*(c,k)\big] \Big) , \\
 & 2\sum 3^\tla 2^\tlb \Big( \z_\sha(a,\bar b,\bar c)
+\z_\sha(a,b,\bar c)+\z_\sha(\bar a,\bar b,c) \Big)
 =2 \sum \Big( \z_*(a,b,\bar c)
+\z_*(a,\bar b,c)
+\z(\bar a,b,c) \Big) \\
 &\ \quad+\binom{v}{2} \z(\bar w)+\sum{}\tlk\Big(2\big[\z_*(k,\bar c)+\z_*(c,\bar k)\big]+\big[\z(\bar c,k)
+\z(\bar k,c)\big]
+\z(\bar k,c)-\z_*(k,\bar c)\Big),\\
 & 6\sum 3^\tla 2^\tlb \z_\sha(\bar a,b,c)
 = \binom{v}{2}\z(\bar w)+6 \sum\z(\bar a,\bar b,\bar c)
+3\sum{}\tlk\Big(\big[ \z(\bar k,c)+\z(\bar c,k) \big]+\z_*(k,\bar c)-\z(\bar k,c)\Big).
\end{align*}
Here we have used the fact that $\sum_{k\ge 2}\tlk(\cdots)=\sum_{k\ge 1}\tlk(\cdots)$.
Exchanging indices $k$ and $c$ for the second term in each of the square brackets and then setting $(k,c)\to (a,b)$,
we find that
\begin{align*}
 & 2 \sum 3^\tla 2^\tlb \z_\sha(a,b,c)
=\binom{v}{2}\frac{\z(w)}3+2\sum \z_*(a,b,c)
+(w-2)\sum{} \z_*(a,b) , \\
 & 2\sum 3^\tla 2^\tlb \Big( \z_\sha(a,\bar b, c)+\z(\bar a,b,\bar c)
+\z(\bar a,\bar b,\bar c) \Big) =
 2\sum \Big( \z(\bar a,b,\bar c)
+\z(\bar a,\bar b,c)
+\z(a,\bar b,\bar c) \Big) \\
 & \ \qquad+\binom{v}{2}\z(w)+(w-2)\sum{} \Big( 2\z(\bar a,\bar b)+\z_*(a,b) \Big) , \\
 & 2\sum 3^\tla 2^\tlb \Big( \z_\sha(a,\bar b,\bar c)
+\z_\sha(a,b,\bar c)+\z(\bar a,\bar b,c) \Big)
 = 2\sum \Big(\z_*(a,b,\bar c)+\z_*(a,\bar b,c)+\z(\bar a,b,c) \Big) \\
 &\ \qquad+\binom{v}{2}\z(\bar w)+(w-2)\sum{} \Big(2\z_*(a,\bar b)+\z(\bar a,b)\Big)
+\sum{}' \tla \Big(\z_*(\bar a,b)-\z(a,\bar b)\Big),\\
 & 2\sum 3^\tla 2^\tlb \z(\bar a,b,c)
 = \binom{v}{2}\frac{\z(\bar w)}3+2 \sum \z(\bar a,\bar b,\bar c)
+(w-2)\sum{} \z_*(\bar a,b)+\sum{}'\tla\Big(\z_*(a,\bar b)-\z(\bar a,b)\Big).
\end{align*}
The theorem follows easily from Theorems \ref{thm:sumzab}, \ref{thm:sumz1bc} and
\ref{thm:sumzabc}.
\end{proof}

Finally, we are able to prove Kaneko--Tsumura's conjecture on triple $T$-values.
\begin{cor} \label{cor:KTconj}
For all $w\ge 4$
\begin{equation*}
    \sum_{a+b+c=w} 2^b(3^{a-1}-1) T(a,b,c)=\frac23  (w-1)(w-2) T(w).
\end{equation*}
\end{cor}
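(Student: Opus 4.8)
The plan is to reduce everything to the two weighted sum formulas of Theorems~\ref{thm:sumzabcWt3a2bPOWER} and~\ref{thm:sumzabcWt2bPOWER} via the standard decomposition of triple $T$-values into signed triple Euler sums. First I observe that the coefficient $2^b(3^{a-1}-1)$ vanishes when $a=1$, so the sum is effectively supported on $a\ge 2$, where every $T(a,b,c)$ that occurs is a genuine (admissible) triple $T$-value. Expanding the product $(1-(-1)^{n_1})(1+(-1)^{n_2})(1-(-1)^{n_3})$ implicit in \eqref{equ:defnMTV} gives the eight-term decomposition
\[
T(a,b,c)=\z(a,b,c)+\z(a,\bar{b},c)-\z(\bar{a},b,c)-\z(\bar{a},\bar{b},c)-\z(a,b,\bar{c})-\z(a,\bar{b},\bar{c})+\z(\bar{a},b,\bar{c})+\z(\bar{a},\bar{b},\bar{c}).
\]

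Next I split $2^b(3^{a-1}-1)=2^b3^{a-1}-2^b$ and treat the two weighted sums separately. For $\sum 2^b3^{a-1}T(a,b,c)$ I group the eight Euler sums exactly as in the four identities of Theorem~\ref{thm:sumzabcWt3a2bPOWER}; the sign pattern of the decomposition makes the three summands in each of its middle two equations carry a common sign, so this sum equals (first right-hand side) $+$ (second) $-$ (third) $-$ (fourth). Likewise $\sum 2^b T(a,b,c)$ is assembled from the six identities of Theorem~\ref{thm:sumzabcWt2bPOWER}, the two $\bar{c}$-paired equations entering with a factor $2$ because their weight is $2^{\tlb}$ rather than $2^b$. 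A key observation is that the non-admissible $a=1$ contributions---the very reason the two theorems are phrased with $\z_\sha$---enter both weighted sums with the identical coefficient (namely $2^b$, since $3^{a-1}=1$ at $a=1$), and therefore cancel in the difference. This is what legitimizes using the all-$a\ge 1$ regularized identities to compute a quantity that is honestly supported on $a\ge 2$, and it simultaneously forces the $T$-dependent pieces $\z_*(1,v)$, $\z_*(1,\bar{v})$, $\z_*(1,1,u)$, etc., to drop out.

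Substituting the explicit right-hand sides and subtracting is then a matter of collecting coefficients. I expect all the double and triple Euler sums, together with the products $\z(2)\z(\bar{u})$ and $\z(\bar2)\z(\bar{u})$, to cancel, leaving only $\z(w)$ and $\z(\bar{w})$. To force the remaining cancellations I would invoke the stuffle relation $\z(2)\z(u)=\z(2,u)+\z(u,2)+\z(w)$ and its barred analogues, the depth-two sum formulas of Theorem~\ref{thm:sumzab}, and the identities $2\z(\bar2)=-\z(2)$ recorded in \eqref{equ:z11rel}. Tracking the surviving single-zeta coefficients, the binomial terms $\tfrac13\binom{v}{2}$ contributed by Theorem~\ref{thm:sumzabcWt3a2bPOWER} combine with the terms linear in $u$ and $w$ (using $u=w-2$, so $u-w+2=0$) to produce $+\tfrac43\binom{v}{2}$ on $\z(w)$ and $-\tfrac43\binom{v}{2}$ on $\z(\bar{w})$; since $\tfrac43\binom{v}{2}=\tfrac23(w-1)(w-2)$, the answer is $\tfrac23(w-1)(w-2)\bigl(\z(w)-\z(\bar{w})\bigr)=\tfrac23(w-1)(w-2)T(w)$, using $T(w)=\z(w)-\z(\bar{w})$.

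The main obstacle is precisely this last reduction. Each of the two theorems feeds in a large number of depth-two and depth-three terms, and verifying that every one of them cancels is a delicate bookkeeping problem. The subtle point is that the admissible double zeta values $\z(u,2)$ and $\z(2,u)$ (and the products $\z(2)\z(u)$) do not disappear under a naive term-by-term comparison; they survive until one applies the stuffle product, which converts $\z(u,2)+\z(2,u)$ into $\z(2)\z(u)-\z(w)$ and thereby reshuffles the $\z(w)$-count at the same time it clears the products. Keeping the single-zeta coefficient honest through this interaction---so that the $\z(2)\z(u)$-type products vanish and the $\z(w)$ coefficient lands exactly on $\tfrac23(w-1)(w-2)$---is where the real care is required, and it is the only genuinely non-mechanical part of the argument.
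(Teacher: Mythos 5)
Your proposal is correct and follows essentially the same route as the paper: decompose $T(a,b,c)$ into the eight signed triple Euler sums, split the weight as $2^b3^{a-1}-2^b$ so that Theorems~\ref{thm:sumzabcWt3a2bPOWER} and~\ref{thm:sumzabcWt2bPOWER} apply directly (with the regularized $a=1$ contributions cancelling in the difference), and then clear the residual depth-two and product terms via stuffle relations and \eqref{equ:z11rel}. Your groupings, sign bookkeeping, and the observation about how $\z(u,2)+\z(2,u)$ reshuffles the $\z(w)$-coefficient all match what the paper actually does.
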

\begin{proof}
Set $u=w-2$ and $v=w-1$ as before. By Theorems~\ref{thm:sumzabcWt2bPOWER} and \ref{thm:sumzabcWt3a2bPOWER} we see that
\begin{align*}
  & \sum_{a+b+c=w} 2^b(3^{a-1}-1) T(a,b,c)=  \frac23(w-1)(w-2)\Big(\z(w)-\z(\bar{w})\Big)  -2\z(w)+2\z(\bar{w})\\
  & \hskip1cm +2\z(2)\z(u)-3\z(2)\z(\bar{u})-2\z(\bar{u},\bar1,\bar1)-2\z(\bar1,\bar1,\bar{u})-2\z(\bar1,\bar{u},\bar1)-2\z(\bar1,v) -2\z(v,\bar1)\\
  & \hskip1cm +2\z(\bar1,1,\bar{u})+2\z(\bar{u},\bar1,1)+2\z(\bar1,\bar{u},1)+6\z(\bar1,u,1)+6\z(u,\bar1,1)+6\z(\bar1,1,u)+6\z(\bar1,v)\\
  & \hskip1cm -6\z(\bar1,\bar1,u)-6\z(\bar1,u,\bar1)-6\z(u,\bar1,\bar1)-6\z(\bar{v},\bar1)-4\z(\bar1,\bar{v})+2\z(v,1)+6\z(\bar{v},1)\\
  & \hskip1cm -2\z(u,\bar2)-2\z(\bar2,u)+4\z(\bar{u},2)+4\z(\bar2,\bar{u})+4\z(\bar{u},\bar2)+4\z(2,\bar{u})-6\z(2,u)-6\z(u,2).
\end{align*}
Each of the last four lines can be simplified further by stuffle relations so that we get
\begin{align*}
  & \sum_{a+b+c=w} 2^b(3^{a-1}-1) T(a,b,c)= \frac43\binom{v}{2}\Big(\z(w)-\z(\bar{w})\Big)
-4\z(2)\z(u)+\z(2)\z(\bar{u})\\
  & \hskip1cm+4\z(\bar2)\z(\bar{u})-2\z(\bar2)\z(u)
-2\z(\bar{u})\big(\z(\bar1,\bar1)-\z(\bar1,1)\big)
+6\z(u)\big(\z(\bar1,1)-\z(\bar1,\bar1)\big).
\end{align*}
So the corollary follows immediately from \eqref{equ:z11rel}.
\end{proof}

{\bf Acknowledgments.}  Ce Xu is supported by the Scientific Research Foundation for Scholars of Anhui Normal University.

 {\small
}

\end{document}